\newcommand{\Z}{{\mathbb Z}}
\newcommand{\C}{{\mathbb C}}
\newcommand{\Q}{{\mathbb Q}}
\newcommand{\R}{{\mathbb R}}
\newcommand{\Br}{\mathrm{Br}}
\newcommand{\Ga}{\mathrm{Gal}}
\newtheorem{thm}{Theorem}[section]
\newtheorem{lemma}[thm]{Lemma}
\newtheorem{prop}[thm]{Proposition}
\newtheorem{cor}[thm]{Corollary}
\newcommand{\Hom}{\mathrm{Hom}}
\newcommand{\Fpm}{\mathrm{(F}_m'\mathrm{)}}
\newcommand{\tF}{\mathrm{(F)}}
\newcommand{\he}{H_{\mathrm{\acute{e}t}}}
\newcommand{\tT}{\mathbb{T}}
\begin{document}

\title[Fields of type $\tF$ and $\Fpm$]{A generalization of Serre's condition $\tF$ with applications to the finiteness of unramified cohomology}

\author[I.~Rapinchuk]{Igor A. Rapinchuk}

\begin{abstract}
In this paper, we introduce a condition $\mathrm{(F}_m'\mathrm{)}$ on a field $K$, for a positive integer $m$,
%where $m$ is a positive integer,
that generalizes Serre's condition (F) and which still implies the finiteness of the Galois cohomology of finite Galois modules annihilated by $m$ and algebraic $K$-tori that split over an extension of degree dividing $m$, as well as certain groups of \'etale and unramified cohomology.
%and also certain groups of etale and unramified cohomology.
Various examples of fields satisfying $\mathrm{(F}_m'\mathrm{)}$, including those that do not satisfy (F), are given.
%We propose a generalization of Serre's condition $\tF$ and discuss several applications to finiteness properties of unramified cohomology.
\end{abstract}

\address{Department of Mathematics, Michigan State University, East Lansing, MI, 48824 USA}

\email{rapinch@math.msu.edu}

\maketitle

\section{Introduction}\label{S-1}

In \cite{SerreGC}, Serre introduced the following condition on a profinite group $G$:

\vskip2mm

$\mathrm{(F)}$ \ \ \ \parbox[t]{15.5cm}{{\it For every integer $m \geq 1$, $G$ has finitely many open subgroups of index $m$.}}

\vskip2mm

\noindent (see \cite[Ch. III, \S4.2]{SerreGC}). He then defined a perfect field $K$ to be {\it of type $\tF$} if the absolute Galois group $G_K = \Ga(\overline{K}/K)$ satisfies $\tF$ --- notice that this is equivalent to the fact that for every integer $m$, the (fixed) separable closure $\overline{K}$ contains only finitely many extensions of $K$ of degree $m$. This property provides a general framework for various finiteness results involving Galois cohomology, orbits of actions on rational points, etc. In particular, Serre showed that if
$K$ is a field of type $\tF$ and $\mathcal{G}$ is a linear algebraic group defined over $K$, then the Galois cohomology set $H^1(K,\mathcal{G})$ is finite (cf. \cite[Ch. III, \S4.3, Theorem 4]{SerreGC}).

Among the examples of fields of type $\tF$ given in {\it loc. cit.} are $\R$ and $\C$, finite fields, the field $C(\!(t)\!)$ of formal Laurent series over an algebraically closed field $C$ of characteristic 0, and $p$-adic fields (i.e. finite extensions of $\Q_p$). More generally, Serre noted that if $K$ is a perfect field such that $G_K$ is topologically finitely generated, then $K$ is of type $\tF$ (see \cite[Ch. III, \S4.1, Proposition 9]{SerreGC}). Using this, one can show, for example, that if $K$ is a field of characteristic 0 and $G_K$ is finitely generated, then the field $K(\!(t_1)\!) \cdots (\!(t_r)\!)$ of iterated Laurent series over $K$ is of type $\tF$ for any $r \geq 1$ (see Proposition \ref{P-Topfg}).
On the other hand, even when $k$ is a finite field of characteristic $p > 0$, it follows from Artin-Schreier theory that the (imperfect) field $L = k(\!(t)\!)$ has infinitely many cyclic extensions of degree $p$, hence is not of type $\tF.$ In this note, we would like to propose a generalization of condition $\tF$ which does hold in this case as well as in some other situations where $\tF$ fails, and which suffices to establish some finiteness results for Galois cohomology, and particularly for unramified cohomology.

So, let $K$ be a field and $m \geq 1$ be an integer prime to ${\rm char}~K.$
We will say that $K$ is {\it of type $\Fpm$} if
\vskip2mm

$\Fpm$ \ \ \ \parbox[t]{15.5cm}{{\it For every finite separable extension $L/K$, the quotient $L^{\times}/{L^{\times}}^m$ of the multiplicative group $L^{\times}$ by the the subgroup of $m$-th powers is finite.}}

\vskip2mm
\noindent We note that if $K$ is a field of type $\tF$, then it is also of type $\Fpm$ for all $m$ prime to ${\rm char}~K$ (see Lemma \ref{L-CondF}). However, condition $\Fpm$ appears to be somewhat more flexible in several respects. First, we do not require that the field $K$ be perfect. Also,
in contrast to what happens with $\tF$, one shows that if $K$ is of type $\Fpm$ for some $m$ prime to ${\rm char}~K$, then so is $K(\!(t_1)\!)\cdots (\!(t_r)\!)$ for any $r \geq 1$ (see Corollary \ref{C-FmLS}). Furthermore, in Example 2.9, we give a construction of fields of characteristic 0 that satisfy $\Fpm$ for some, but not all, integers $m$ (and hence, in particular, are not of type ${\rm (F)}$).

As in the case of condition $\tF$, the key motivation for introducing $\Fpm$ is to give a suitable context for finiteness properties in Galois cohomology. The main result is the following.

\begin{thm}\label{T-FieldF'}
Let $K$ be a field and $m$ be a positive integer prime to $\mathrm{char}~K.$ Assume that $K$ is of type $\Fpm.$ Then for any finite $G_K$-module $A$ such that $mA = 0$, the groups $H^i (K, A)$ are finite for all $i \geq 0$.
\end{thm}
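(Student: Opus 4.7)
The plan is a two-step d\'evissage that reduces the assertion (in positive degree) to the case $A = \Z/p\Z$ with trivial $G_K$-action for a prime $p \mid m$, followed by an appeal to the norm residue isomorphism theorem of Voevodsky (the Bloch--Kato conjecture), which identifies the resulting cohomology with Milnor $K$-theory modulo $p$; the hypothesis $\Fpm$ then enters precisely as the finiteness of $K^\times/(K^\times)^p$. The case $i=0$ is immediate, since $H^0(K,A)=A^{G_K}\subseteq A$ is finite.

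For $i \geq 1$, we first choose a finite Galois extension $L/K$ over which $G_L$ acts trivially on $A$, enlarging it if necessary so that $\mu_m \subset L$ (possible since $[K(\mu_m):K] \mid \varphi(m)$ is finite when $\gcd(m,\mathrm{char}\,K)=1$). Any such $L$ is again of type $\Fpm$, since a finite separable extension of $L$ is a finite separable extension of $K$. The Hochschild--Serre spectral sequence
$$E_2^{r,s}=H^r(\Ga(L/K),H^s(L,A))\Longrightarrow H^{r+s}(K,A)$$
has $E_2$-terms that are cohomology groups of a finite group with coefficients in a finite module, so the finiteness of each $H^s(L,A)$ implies the finiteness of each $H^i(K,A)$. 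We may therefore assume that $G_K$ acts trivially on $A$ and $\mu_m \subset K$. A second d\'evissage, inducting on $|A|$ via a cyclic subgroup $A' \subset A$ of prime order $p \mid m$ together with the long exact sequence associated to $0 \to A' \to A \to A/A' \to 0$, further reduces the problem to $A = \Z/p\Z$ with trivial action. Since $\mu_p \subset K$, this module is $G_K$-isomorphic to $\mu_p$, and hence to $\mu_p^{\otimes i}$ for every $i \geq 1$ (the action on each tensor factor being trivial and $\mu_p^{\otimes i}$ being abstractly cyclic of order $p$).

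It remains to show that $H^i(K,\mu_p)$ is finite for every $i \geq 1$. By Voevodsky's theorem, there is a canonical isomorphism
$$K_i^M(K)/p \;\xrightarrow{\sim}\; H^i(K,\mu_p^{\otimes i})\cong H^i(K,\mu_p),$$
and $K_i^M(K)/p$ is spanned as an $\F_p$-vector space by symbols $\{a_1,\ldots,a_i\}$ whose classes depend only on the images of the $a_j$ in $K^\times/(K^\times)^p$. Because $p \mid m$ gives $(K^\times)^m \subseteq (K^\times)^p$, the group $K^\times/(K^\times)^p$ is a quotient of the finite group $K^\times/(K^\times)^m$; therefore the set of generating symbols has at most $|K^\times/(K^\times)^p|^i$ elements, and $H^i(K,\mu_p)$ is finite. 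The main nontrivial ingredient is this last step, namely the appeal to the norm residue isomorphism theorem; the preceding reductions are standard, and the hypothesis $\Fpm$ enters only through the finiteness of $K^\times/(K^\times)^p$.
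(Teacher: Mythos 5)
Your proof is correct and takes essentially the same route as the paper: pass to a finite extension $L/K$ that trivializes the $G_K$-action on $A$ and contains $\mu_m$, invoke the Hochschild--Serre spectral sequence, reduce $A$ to cyclic modules of prime order $p\mid m$, and use the norm residue isomorphism theorem together with the finiteness of $L^\times/(L^\times)^p$ (itself a quotient of $L^\times/(L^\times)^m$) to bound the number of Milnor symbols. The only cosmetic differences are that you reduce to $\Z/p\Z$ by d\'evissage along a cyclic composition series rather than quoting the direct-sum decomposition of a finite abelian group, and since your $L$ is separable over $K$ you do not need the paper's Lemma~\ref{L-Insep} on inseparable extensions.
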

The proof of this statement, which will be given in \S\ref{S-2a}, relies on the Norm Residue Isomorphism Theorem (formerly the Bloch-Kato Conjecture).

\vskip2mm

\noindent {\bf Remark 1.2.} We note that in \cite[Ch. III, \S 4]{SerreGC}, Serre only proves that condition (F) implies the finiteness of $H^1(K,A)$, where $A$ is a finite $G_K$-group (he considers both commutative and non-commutative $A$). It appears that to establish the finiteness of cohomology groups in all degrees $i \geq 1$, the use of the Bloch-Kato Conjecture cannot be avoided. We should also point out that recently, there have been attempts to approach the Bloch-Kato conjecture via an analysis of abstract properties of profinite groups in \cite{DCF}; so far, however, these efforts have shown only partial success, with no apparent bearing on Theorem \ref{T-FieldF'}. 

%, which so far have not been fully succesful

%We note that in [.] Serre only proves that (F) implies the finiteness of H^1... while we prove the finiteness of H^i for all i \geq 1; our argument uses the assertion of the Bloch-Kato conjecture but this appears to be unavoidable. We should also point out that recently there have been attempts to approach Bk conjecture via analysis of abstarct properties of profinite groups [.], which so far have not been fully succesful

\vskip2mm

\addtocounter{thm}{1}

In this note, our main applications of Theorem \ref{T-FieldF'} will be several finiteness results for unramified cohomology.
First, let us recall that if $K$ is a field equipped with a discrete valuation $v$, then for any positive integer $m$ prime to the characteristic of the residue field $\kappa (v)$, any $i \geq 1$, and any $j$, there exists a {\it residue map} in Galois cohomology
$
\partial_v^i \colon H^i (K, \mu_m^{\otimes j}) \to H^{i-1} (\kappa (v), \mu_m^{\otimes (j-1)})
$
(see, e.g., \cite[Ch. II, \S7]{GaMeSe} for a construction of $\partial_v^i$ and the end of this section for all unexplained notations).
%the end of this section for all unexplained notations and \cite[\S 3.3]{CT-SB} for a description of several constructions of $\partial_v^i$).
A cohomology class $x \in H^i (K, \mu_m^{\otimes j})$ is said to be {\it unramified at $v$} if $x \in \ker \partial_v^i.$ Furthermore, if $V$ is a set of discrete valuations of $K$ such that the maps $\partial_v^i$ exist for all $v \in V$, one defines the {\it degree $i$ unramified cohomology of $K$ with respect to $V$} as
$$
H^i (K, \mu_m^{\otimes j})_V = \bigcap_{v \in V} \ker \partial_v^i.
$$
Now suppose that
%In this note, we will be concerned with unramified cohomology groups that arise in geometric situations.
%Namely, suppose
$X$ is a smooth algebraic variety over a field $F$ with function field $F(X).$ Then each point $x \in X$ of codimension 1 defines a discrete valuation $v_x$ on $F(X)$ that is trivial on $F$. We let
$
V_0 = \{v_x \mid x \in X^{(1)} \}
$
denote the set of all such {\it geometric places} of $F(X)$ and define
$$
H^i_{\mathrm{ur}} (F(X), \mu_m^{\otimes j}) = H^i (F(X), \mu_m^{\otimes j})_{V_0}
$$
for any positive integer $m$ invertible in $F$.\footnotemark \footnotetext{Another definition of unramified cohomology that is frequently encountered is
$$
H^i_{\mathrm{nr}} (F(X), \mu_m^{\otimes j}) = H^i (F(X), \mu_m^{\otimes j})_{V_1},
$$
where $V_1$ is the set of all discrete valuations $v$ of $F(X)$ such that $F$ is contained in the valuation ring $\mathcal{O}_v$.
Clearly, there is an inclusion $H^i_{\mathrm{nr}} (F(X), \mu_m^{\otimes j}) \subset H^i_{\mathrm{ur}} (F(X), \mu_m^{\otimes j})$, which is in fact an equality if $X$ is proper (see \cite[Theorem 4.1.1]{CT-SB}). Note that by construction, $H^i_{\mathrm{nr}} (F(X), \mu_m^{\otimes j})$ is a birational invariant of $X$.} With these notations, we have the following.

\begin{thm}\label{T-MainThm}
Let $K$ be a field and $m$ be a positive integer prime to ${\rm char}~K$. Assume that $K$ is of type $\Fpm.$
\vskip2mm

\noindent $\mathrm{(a)}$ \parbox[t]{16cm}{Suppose $C$ is a smooth, geometrically integral curve over $K$. Then the
unramified cohomology groups $H^i_{\mathrm{ur}} (K(C), \mu_m^{\otimes j})$ are finite for all $i \geq 1$ and all $j$.}

\vskip3mm

\noindent $\mathrm{(b)}$ \parbox[t]{16cm}{Let $X$ be a smooth, geometrically integral algebraic variety of dimension $\geq 2$ over $K$. Then the unramified cohomology group $H^3_{\mathrm{ur}} (K(X), \mu_m^{\otimes 2})$ is finite if and only if $CH^2 (X)/m$ is finite, where $CH^2(X)$ is the Chow group of codimension 2 cycles modulo rational equivalence.}

\end{thm}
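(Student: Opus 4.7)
The plan for part (a) is to first reduce to the smooth projective case. Let $\overline{C}$ denote the smooth projective model of $C$, so that $\overline{C} \setminus C = \{x_1, \ldots, x_r\}$ is a finite set of closed points, each with residue field $\kappa(x_k)$ a finite extension of $K$ and hence itself of type $\Fpm$. Comparing $H^i_{\mathrm{ur}}$ with respect to $C^{(1)}$ and $\overline{C}^{(1)}$ through the residue maps at these extra points gives a left-exact sequence
\[
0 \to H^i_{\mathrm{ur}}(K(\overline{C}), \mu_m^{\otimes j}) \to H^i_{\mathrm{ur}}(K(C), \mu_m^{\otimes j}) \to \bigoplus_{k=1}^{r} H^{i-1}(\kappa(x_k), \mu_m^{\otimes (j-1)}),
\]
whose right-hand term is finite by Theorem \ref{T-FieldF'} applied to each $\kappa(x_k)$. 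So it suffices to prove the finiteness of $H^i_{\mathrm{ur}}(K(\overline{C}), \mu_m^{\otimes j})$.

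For this I would use the Bloch--Ogus spectral sequence $E_2^{p,q} = H^p_{\mathrm{Zar}}(\overline{C}, \cH^q(\mu_m^{\otimes j})) \Rightarrow H^{p+q}_{\mathrm{\acute{e}t}}(\overline{C}, \mu_m^{\otimes j})$, which is supported in columns $p \leq 1$ since $\dim \overline{C} = 1$. Combined with the standard identification $H^0(\overline{C}, \cH^i(\mu_m^{\otimes j})) = H^i_{\mathrm{ur}}(K(\overline{C}), \mu_m^{\otimes j})$ (a consequence of the Bloch--Ogus Gersten resolution), this produces a surjection $H^i_{\mathrm{\acute{e}t}}(\overline{C}, \mu_m^{\otimes j}) \twoheadrightarrow H^i_{\mathrm{ur}}(K(\overline{C}), \mu_m^{\otimes j})$, reducing the problem to finiteness of $H^i_{\mathrm{\acute{e}t}}(\overline{C}, \mu_m^{\otimes j})$. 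The latter follows from the Hochschild--Serre spectral sequence $H^p(G_K, H^q_{\mathrm{\acute{e}t}}(\overline{C}_{\overline{K}}, \mu_m^{\otimes j})) \Rightarrow H^{p+q}_{\mathrm{\acute{e}t}}(\overline{C}, \mu_m^{\otimes j})$: the groups $H^q_{\mathrm{\acute{e}t}}(\overline{C}_{\overline{K}}, \mu_m^{\otimes j})$ are finite for $q \leq 2$ and vanish for $q \geq 3$, while Theorem \ref{T-FieldF'} makes each $H^p(G_K, \cdot)$ with finite $m$-torsion coefficients finite, leaving only finitely many nonzero $E_2$-terms, all of finite size.

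For part (b), the plan is to compare $H^3_{\mathrm{ur}}(K(X), \mu_m^{\otimes 2})$ with $CH^2(X)/m$ directly via the Bloch--Ogus spectral sequence on $X$. The two key input identifications are
\[
H^0(X, \cH^3(\mu_m^{\otimes 2})) = H^3_{\mathrm{ur}}(K(X), \mu_m^{\otimes 2}), \qquad H^2(X, \cH^2(\mu_m^{\otimes 2})) \cong CH^2(X)/m,
\]
the latter coming from Bloch's formula $H^2(X, \cK_2) = CH^2(X)$ together with the Merkurjev--Suslin sheaf isomorphism $\cK_2/m \cong \cH^2(\mu_m^{\otimes 2})$. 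Bloch--Ogus purity ($E_2^{p,q} = 0$ for $p > q$) truncates the $E_2$-page, and inspecting total degrees $3$ and $4$, together with the $d_2$-differential $H^3_{\mathrm{ur}} \to CH^2(X)/m$, yields a five-term exact sequence
\[
H^1(X, \cH^2(\mu_m^{\otimes 2})) \to H^3_{\mathrm{\acute{e}t}}(X, \mu_m^{\otimes 2}) \to H^3_{\mathrm{ur}}(K(X), \mu_m^{\otimes 2}) \xrightarrow{d_2} CH^2(X)/m \to H^4_{\mathrm{\acute{e}t}}(X, \mu_m^{\otimes 2}).
\]

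Granting this sequence, the equivalence in (b) is immediate once the three outer terms are shown to be finite. By the same Hochschild--Serre argument as in part (a), $H^i_{\mathrm{\acute{e}t}}(X, \mu_m^{\otimes 2})$ is finite for $i = 3, 4$, using that $H^q_{\mathrm{\acute{e}t}}(X_{\overline{K}}, \mu_m^{\otimes 2})$ is finite for every $q$ by constructibility for varieties of finite type over $\overline{K}$ with coefficients invertible in $\overline{K}$; moreover, the Bloch--Ogus spectral sequence identifies $E_\infty^{1,2} = E_2^{1,2} = H^1(X, \cH^2(\mu_m^{\otimes 2}))$, because the $d_r$-differentials at $(1,2)$ go to or from positions where $E_2 = 0$, and this common group embeds into the finite group $H^3_{\mathrm{\acute{e}t}}(X, \mu_m^{\otimes 2})$ as a piece of the coniveau filtration. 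The main obstacle is passing from the abstract Bloch--Ogus spectral sequence to the precise five-term sequence above, which requires careful tracking of the filtration on $H^3_{\mathrm{\acute{e}t}}$ and $H^4_{\mathrm{\acute{e}t}}$ through the remaining possibly nonzero positions $(0,4)$ and $(1,3)$; verifying the Bloch formula identification $H^2(X, \cH^2) \cong CH^2(X)/m$ (which itself rests on the Gersten conjecture for Milnor K-theory mod $m$) is a further technical input.
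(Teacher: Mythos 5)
Your argument is essentially the paper's: both parts rest on the Bloch--Ogus spectral sequence, the identifications $E_2^{0,q}=H^q_{\mathrm{ur}}$ and $E_2^{2,2}\cong CH^2(X)/m$, and the finiteness of $\he^i(X,\mu_m^{\otimes j})$ obtained from Hochschild--Serre together with Theorem \ref{T-FieldF'} (this is exactly Corollary \ref{C-FinEt}); your five-term sequence in (b) is the paper's four-term sequence (Lemma \ref{L-SpecSeqLemma} applied to the Bloch--Ogus spectral sequence (\ref{E-BO})) with one extra, harmless term on the left. The only genuine difference is your preliminary reduction in (a) to the smooth projective model $\overline{C}$. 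That step is unnecessary --- the vanishing $E_2^{p,q}=0$ for $p>1$ and the edge surjection $\he^i(C,\mu_m^{\otimes j})\twoheadrightarrow H^i_{\mathrm{ur}}(K(C),\mu_m^{\otimes j})$ hold for the (possibly affine) smooth curve $C$ itself, and Corollary \ref{C-FinEt} already covers non-proper varieties --- and it is also slightly hazardous: over an imperfect field a smooth curve need not admit a \emph{smooth} projective compactification (the regular projective model can acquire non-smooth closed points with inseparable residue fields on the boundary), and the Bloch--Ogus input you invoke for $\overline{C}$ requires smoothness. Dropping the reduction and running your second paragraph directly on $C$ recovers exactly the paper's proof of (a), and your treatment of (b) matches the paper's, with the ``main obstacle'' you flag being precisely the standard spectral-sequence bookkeeping recorded in Lemma \ref{L-SpecSeqLemma}.
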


We should point out that the finiteness statement of part (a) has been used in \cite{CRR4} and \cite{CRR5} to show that if $K$ is a field of characteristic $\neq 2$ that satisfies $\mathrm{(F}_2'\mathrm{)}$, then the number of $K(C)$-isomorphism classes of spinor groups $G = \mathrm{Spin}_n(q)$  of nondegenerate quadratic forms $q$ over $K(C)$ in $n \geq 5$ variables, as well as of groups of some other types, that have good reduction at all geometric places $v \in V_0$ is finite. In fact, this result is part of a general program of studying absolutely almost simple groups having the same maximal tori over the field of definition --- we refer the reader to \cite{CRR3} for a detailed overview of these problems and several conjectures.

Our second application deals with the unramified cohomology of algebraic tori.
%As a second application, we would like to mention a finiteness result for the unramified cohomology of algebraic tori.
Suppose $X$ is a smooth,
geometrically integral variety over a field $F$. Given a torus $\mathbb{T}$ over $X$, we define the
the degree 1 unramified cohomology $H^1_{{\rm ur}}(F(X), T)$ of the generic fiber $T = \mathbb{T}_{F(X)}$ by
$$
H^1_{{\rm ur}}(F(X),T) = {\rm Im}~(\he^1(X, \mathbb{T}) \stackrel{f}{\longrightarrow} H^1(F(X), T)),
$$
where $f$ is the natural map (see \S\ref{S-5} for the relation between this definition and the above definition of unramified cohomology using residue maps). We have the following statement.

%(cf. \cite[\S 6]{CTS}). Now, if $T$ is a torus defined over $F$, then $T$ extends to a torus $T_X = T \times_F X$ over $X$ with generic fiber $T_{F(X)} = T \times_F F(X).$ We have the following statement.

\begin{thm}\label{T-MainThm2}
Let $X$ be a smooth, geometrically integral algebraic variety over a field $K$ and $\mathbb{T}$ be a torus over $X$ with generic fiber $T = T_{K(X)}.$ Denote by $K(X)_T$ the minimal splitting field of $T$ inside a fixed separable closure $\overline{K(X)}$ of $K(X).$ If $K$ is of type $\Fpm$ for some positive integer $m$ that is prime to ${\rm char}~K$ and divisible by $[K(X)_T:K(X)]$,
%Suppose that $K$ is of type $\Fpm$ for some positive integer $m$ prime to ${\rm char}~K.$ If $[K(X)_T:K(X)]$ divides $m$,
then the unramified cohomology group $H^1_{{\rm ur}}(K(X), T)$ is finite.
%Let $T$ be a torus defined over a field $K$ and denote by $K_T$ the minimal Galois splitting field of $T$ inside a fixed separable closure $\overline{K}$ of $K$. Suppose that $K$ is of type $\Fpm$ for some positive integer $m$ prime to ${\rm char}~K.$ If $[K_T:K]$ divides $m$,
%If $m = [K_T : K]$ is prime to ${\rm char}~K$ and $K$ is of type  $\Fpm$,
%then for any smooth, geometrically integral algebraic variety $X$ over $K$, the unramified cohomology group $H^1_{{\rm ur}}(K(X), T_{K(X)})$ is finite.
\end{thm}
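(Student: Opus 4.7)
The plan is to reduce the problem, via the Kummer sequence, to the finiteness of $\he^2(X, \mathbb{T}[m])$, which can then be handled by the Leray spectral sequence combined with Theorem \ref{T-FieldF'}. The key preliminary observation is that $H^1(K(X), T)$ is annihilated by $m$: since $T$ splits over $L := K(X)_T$, Hilbert 90 gives $H^1(L, T) = 0$ (as $T_L \simeq \mathbb{G}_m^{\dim T}$), and the inflation-restriction sequence yields
$$H^1(K(X), T) \;\cong\; H^1(\Ga(L/K(X)), T(L)),$$
which is killed by $[L:K(X)]$ and hence by $m$. Consequently the image $H^1_{\mathrm{ur}}(K(X), T)$ of $\he^1(X, \mathbb{T}) \to H^1(K(X), T)$ lies in an $m$-torsion group, the map therefore factors through $\he^1(X, \mathbb{T})/m$, and $H^1_{\mathrm{ur}}(K(X), T)$ is a quotient of $\he^1(X, \mathbb{T})/m$; it suffices to prove finiteness of the latter.

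Since $m$ is invertible on $X$, the Kummer sequence $1 \to \mathbb{T}[m] \to \mathbb{T} \xrightarrow{m} \mathbb{T} \to 1$ is exact in the étale topology, and its connecting map supplies an injection $\he^1(X, \mathbb{T})/m \hookrightarrow \he^2(X, \mathbb{T}[m])$. To bound the latter I would apply the Leray spectral sequence for the structure morphism $f \colon X \to \Spec K$,
$$E_2^{p,q} \;=\; H^p(K, \he^q(\overline{X}, \mathbb{T}[m])) \;\Longrightarrow\; \he^{p+q}(X, \mathbb{T}[m]),$$
where $\overline{X} = X \times_K \overline{K}$. The sheaf $\mathbb{T}[m]$ is a finite étale group scheme over $X$ (locally isomorphic to $\mu_m^{\dim T}$) and hence a constructible $\mathbb{Z}/m$-sheaf, so Deligne's finiteness theorem (SGA 4.5) shows that each $\he^q(\overline{X}, \mathbb{T}[m])$ is a finite abelian group annihilated by $m$, equipped with a continuous $G_K$-action. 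Theorem \ref{T-FieldF'} then forces every $E_2^{p,q}$ to be finite, so $\he^2(X, \mathbb{T}[m])$ is finite and the theorem follows.

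The chief technical point is the appeal to Deligne's finiteness theorem for the possibly non-proper smooth variety $\overline{X}$; this is legitimate because $\mathbb{T}[m]$ is constructible of torsion order invertible on $X$ (when $\overline{X}$ is not proper, finiteness of $\he^q(\overline{X}, \mathbb{T}[m])$ still follows from the analogous result for compactly supported cohomology via Poincaré duality on the smooth $\overline{X}$). The divisibility hypothesis $[K(X)_T : K(X)] \mid m$ is used crucially in the opening torsion calculation, while the assumption that $K$ be of type $\Fpm$ enters only at the very end through Theorem \ref{T-FieldF'}.
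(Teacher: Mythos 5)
Your proposal is correct and follows the same skeleton as the paper's proof: Hilbert 90 plus restriction--corestriction to see that $H^1(K(X),T)$ is $m$-torsion, hence that $H^1_{\mathrm{ur}}(K(X),T)$ is a quotient of $\he^1(X,\mathbb{T})/m\he^1(X,\mathbb{T})$, and then the Kummer sequence to embed that quotient into $\he^2(X,\mathbb{T}[m])$. The only place you diverge is in proving the finiteness of $\he^2(X,\mathbb{T}[m])$. You apply the Hochschild--Serre/Leray spectral sequence over $\Spec K$ directly to the twisted coefficient sheaf $\mathbb{T}[m]$, which requires finiteness of $\he^q(\overline{X},\mathbb{T}[m])$ for the possibly non-proper $\overline{X}$; your appeal to Deligne's finiteness theorem (or, since $\mathbb{T}[m]$ is locally constant and $\overline{X}$ is smooth, to Poincar\'e duality and finiteness of compactly supported cohomology) is legitimate. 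The paper instead first passes to a connected finite \'etale Galois cover $Y\to X$ splitting $\mathbb{T}$ (using isotriviality of tori over normal schemes from SGA~3), so that $\mathbb{T}[n]_Y$ becomes a product of $\mu_{n'}$'s, invokes Corollary \ref{C-FinEt} on $Y$, and then descends via the Hochschild--Serre spectral sequence for $Y\to X$. The two devices are interchangeable here; the paper's version keeps all geometric finiteness input confined to constant Tate-twist coefficients (so only Corollary \ref{C-FinEt} and the already-cited SGA~4 reference are needed), while yours trades that reduction for a slightly stronger, but standard, constructible-coefficients finiteness statement.
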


As an application, we will see in Corollary \ref{C-RequivFinite} a consequence for the finiteness of the group of
%As we will see in \S\ref{S-5} below, this result has some consequences for the relation of
$R$-equivalence classes of algebraic tori.

%yields the following finiteness statement for $R$-equivalence on algebraic tori.
%\begin{cor}\label{C-RequivFinite}
%Under the hypotheses of Theorem \ref{T-MainThm2}, for any smooth, geometrically integral variety $X$ over $K$, the group
%of $R$-equivalence classes $T_{K(X)}(K(X))/R$ is finite.
%is finite for any smooth, geometrically integral variety $X$ over $K$.
%Suppose $T$ is a torus defined over $K$, $m = [K_T:K]$ is invertible in $K$, and $K$ is of type $\Fpm.$ Then for any smooth, geometrically integral variety $X$ over $K$, the group of $R$-equivalence classes $T_{K(X)}(K(X))/R$ is finite.
%\end{cor}

The paper is organized as follows. In \S \ref{S-2}, we discuss some key properties and examples of fields of type $\tF$ and $\Fpm$. We then establish Theorem \ref{T-FieldF'} in \S \ref{S-2a} and deduce finiteness statements for the \'etale cohomology of smooth varieties (Corollary \ref{C-FinEt}) and algebraic tori (Corollary \ref{C-FinTori}) over fields of type $\Fpm.$
In \S \ref{S-4},
we briefly review several important points of Bloch-Ogus theory and then establish Theorem \ref{T-MainThm}. Finally, Theorem \ref{T-MainThm2} is proved in \S\ref{S-5}.

%. Theorems \ref{T-MainThm} and \ref{T-MainThm2} are then proved in \S\S \ref{S-4} and \ref{S-5}, respectively.

%In \S \ref{S-4}, we prove Theorem \ref{T-MainThm} and also outline some connections between finiteness questions for unramified cohomology and Bass's conjecture on the finite generation of $K$-groups. Finally, \S 5 is devoted to the unramified cohomology of tori.

\vskip5mm

\noindent {\bf Notations and conventions.} Let $X$ be a scheme. For any positive integer
%For any field $F$ and any positive integer $n$ invertible in $F$, we denote by $\mu_n$ the group of $n$th roots of unity in a fixed separable closure $\bar{F}$ of $F$.
%Next, let $X$ be an arbitrary scheme. Then for any positive integer
$n$ invertible on $X$, we let $\mu_n = \mu_{n, X}$ be the \'etale sheaf of $n$th roots of unity on $X$.
We follow the usual notations for the Tate twists of $\mu_n$. Namely, for $i \geq 0$, we set
$
\Z / n \Z (i) = \mu_n^{\otimes i}
$
(where $\mu_n^{\otimes i}$ is the sheaf associated to the $i$-fold tensor product of $\mu_n$), with the convention that
$
\mu_n^{\otimes 0} = \Z/n \Z.
$
If $i < 0$, we let
$$
\Z / n \Z (i) = \Hom (\mu_n^{\otimes (- i)}, \Z / n \Z).
$$
In the case that $X = \mathrm{Spec}~F$ for a field $F$, we identify $\mu_n$ with the group of $n$th roots of unity in a fixed separable closure $\overline{F}$ of $F$. We will also tacitly identify the \'etale cohomology of ${\rm Spec}~F$ with the Galois cohomology of $F$.

\section{Fields of type $\tF$ and $\Fpm$}\label{S-2}

In this section, we will discuss some key properties and examples of fields of types $\tF$ and $\Fpm.$

\subsection{Fields of type $\tF$} Let $K$ be a perfect field. As we already mentioned in \S\ref{S-1}, if the absolute Galois group $G_K = \Ga(\overline{K}/K)$ is topologically finitely generated, then $K$ is automatically of type $\tF.$ Well-known examples of fields for which $G_K$ is finitely generated include $\C$, $\R$, finite fields, and $p$-adic fields (in this case, it is known that if $[K : \Q_p] = d$, then $G_K$ can be generated by $d + 2$ elements --- see \cite[Theorem 3.1]{J1}). Some further examples of fields with finitely generated absolute Galois group
can be constructed using the next observation.

\begin{prop}\label{P-Topfg}
Let $K$ be a field of characteristic 0 such that $G_K = \Ga (\overline{K}/K)$ is topologically finitely generated. Then for the field $L = K(\!(t)\!)$ of formal Laurent series over $K$, $G_L$ is also topologically finitely generated.
\end{prop}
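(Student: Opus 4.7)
The plan is to exhibit $G_L$ as an extension of $G_K$ by a pro-cyclic group and then count topological generators. Let $L^{\mathrm{ur}}$ denote the maximal unramified extension of $L = K(\!(t)\!)$ inside a fixed separable closure $\overline{L}$. Since $L$ is complete with respect to its $t$-adic valuation, finite unramified extensions of $L$ are in bijection (via Hensel's lemma) with finite separable extensions of the residue field $K$; this yields a canonical identification $\Ga(L^{\mathrm{ur}}/L) \cong G_K$ and hence a short exact sequence of profinite groups
\begin{equation*}
1 \longrightarrow G_{L^{\mathrm{ur}}} \longrightarrow G_L \longrightarrow G_K \longrightarrow 1.
\end{equation*}

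Next, I would identify $G_{L^{\mathrm{ur}}}$. The residue field of $L^{\mathrm{ur}}$ is $\overline{K}$, which has characteristic zero by hypothesis, so every finite extension of $L^{\mathrm{ur}}$ is automatically tamely ramified. By the structure theorem for tame extensions of a Henselian discretely valued field---equivalently, by Puiseux's theorem applied to $\overline{K}(\!(t)\!)$---every finite extension of $L^{\mathrm{ur}}$ is obtained by adjoining $t^{1/n}$ for some $n \geq 1$. Consequently
\begin{equation*}
G_{L^{\mathrm{ur}}} \;\cong\; \varprojlim_n \mu_n(\overline{K}) \;\cong\; \widehat{\Z},
\end{equation*}
which is topologically generated by a single element.

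Finally, I would conclude using the general fact that in a short exact sequence of profinite groups $1 \to N \to G \to Q \to 1$, the middle group is topologically finitely generated whenever both $N$ and $Q$ are. Concretely, if $g_1,\ldots,g_d$ topologically generate $G_K$ and $\sigma$ topologically generates $G_{L^{\mathrm{ur}}}$, I would lift each $g_i$ to some $\tilde g_i \in G_L$ and consider the closed subgroup $H \subset G_L$ generated by $\tilde g_1,\ldots,\tilde g_d,\sigma$. Since $H$ contains $\sigma$, it contains all of $G_{L^{\mathrm{ur}}}$; and since its image in $G_K$ contains the $g_i$, it surjects onto $G_K$. Therefore $H = G_L$, and $G_L$ is topologically generated by $d+1$ elements.

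I do not anticipate any substantive obstacle: the only nontrivial input is the identification $G_{L^{\mathrm{ur}}} \cong \widehat{\Z}$, which is classical and can be cited from Serre's \emph{Local Fields}. The characteristic-zero hypothesis enters precisely at this point to rule out wild ramification in extensions of $L^{\mathrm{ur}}$; without it, $G_{L^{\mathrm{ur}}}$ would acquire a pro-$p$ part that need not be finitely generated, which matches the failure of $(\mathrm{F})$ for $k(\!(t)\!)$ with $k$ of positive characteristic noted in the introduction.
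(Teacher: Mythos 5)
Your proposal is correct and follows essentially the same route as the paper: identify $\Ga(L^{\mathrm{ur}}/L)$ with $G_K$, show $G_{L^{\mathrm{ur}}} \cong \widehat{\Z}$ using that all extensions are tame in characteristic $0$ (the paper does this by showing $F(\sqrt[n]{t})$ is the unique degree-$n$ extension, via the fact that every unit of $L^{\mathrm{ur}}$ is an $n$-th power), and conclude by counting topological generators of the extension. The only cosmetic point is that $L^{\mathrm{ur}}$ is henselian but not complete, so it is not literally $\overline{K}(\!(t)\!)$; one should invoke the tame structure theory for henselian discretely valued fields (or pass to the completion), exactly as the paper does.
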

\begin{proof}
First, we note the following:
\vskip2mm

\noindent $(*)$ \parbox[t]{14.5cm}{\it Suppose $\mathscr{K}$ is a discretely valued henselian field with residue field $\mathbf{k}$ and group of units $\mathscr{U} \subset \mathscr{K}.$ Then for any $m \geq1$ relatively prime to $\mathrm{char}\: \mathbf{k}$, the reduction map induces an isomorphism $\mathscr{U}/\mathscr{U}^m \simeq \mathbf{k}^{\times}/{\mathbf{k}^{\times}}^m$.}

\vskip2mm

\noindent (Indeed, the henselian property implies that the polynomial $X^m - u$ with $u \in \mathscr{U}$ has a root in $\mathscr{K}$ (or $\mathscr{U}$) if and only if its reduction $X^m - \bar{u}$ has a root in $\mathbf{k}$, which leads to the required isomorphism.)

Now let $F = L^{\text{ur}}$ be the maximal unramified extension of $L$. Then $F$ is a discretely valued henselian field having residue field $\overline{K}$ (in fact, in this case, $F$ is simply the compositum of $L$ and $\overline{K}$ inside $\overline{L}$). Since $\Ga(F/L)$ is naturally identified with $G_K$, hence is finitely generated, it is enough to show that $H = \Ga (\overline{L}/F ) \simeq \widehat{\Z}$. This will follow if we verify that for each $n \geq 1$, $F(\sqrt[n]{t})$ is the \emph{unique} degree $n$ extension of $F$. But, since ${\rm char}~K = 0$, a degree $n$ extension $F'/F$ is totally tamely ramified, and consequently is of the form $F(\sqrt[n]{\pi})$ for {\it some} uniformizer $\pi \in F$ (see, e.g., \cite[Ch. II, Proposition 3.5]{FV}). Since the residue field $\overline{K}$ of $F$ is algebraically closed, it follows from $(*)$ that every unit in $F$ is an $n$-th power, hence $F(\sqrt[n]{\pi}) = F(\sqrt[n]{t})$, as required.

\end{proof}

Iterating the statement of the proposition, we obtain, in particular, the following.
\begin{cor}\label{C-TopFg}
Suppose $K$ is $\C$, $\R$, or a $p$-adic field, and let $L = K(\!(t_1)\!) \cdots (\!(t_r)\!)$ for some $r \geq 0$. Then $G_L$ is topologically finitely generated, and therefore $L$ is of type $\tF.$
%Let $k$ be one of the following fields: $\C$, $\R$ or a $p$-adic field, and let $K = k(\!(t_1)\!) \cdots (\!(t_r)\!)$ for some $r \geqslant 0$. Then the absolute Galois group $\mathscr{G}_K$ is finitely generated, and therefore $K$ is of type $(\mathrm{F})$.
\end{cor}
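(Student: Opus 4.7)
The plan is a straightforward induction on $r$, using Proposition \ref{P-Topfg} as the inductive step. The base case $r = 0$ reduces to verifying that $G_K$ is topologically finitely generated when $K$ is $\C$, $\R$, or a $p$-adic field: this is trivial for $\C$, immediate for $\R$ (where $G_\R \simeq \Z/2\Z$), and recalled in \S\ref{S-1} for $p$-adic fields via \cite[Theorem 3.1]{J1}. In all three cases $K$ has characteristic $0$, so the same is true of each iterated Laurent series field $K(\!(t_1)\!)\cdots(\!(t_j)\!)$.

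For the inductive step, assume that $L_{r-1} := K(\!(t_1)\!)\cdots(\!(t_{r-1})\!)$ has characteristic $0$ with $G_{L_{r-1}}$ topologically finitely generated. Since $L = L_{r-1}(\!(t_r)\!)$, we may apply Proposition \ref{P-Topfg} to the field $L_{r-1}$ in place of $K$ to conclude that $G_L$ is topologically finitely generated. This completes the induction.

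Finally, since $L$ has characteristic $0$, it is perfect, so the conclusion that $L$ is of type $\tF$ follows from the general fact recalled in \S\ref{S-1} (\cite[Ch.~III, \S4.1, Proposition 9]{SerreGC}) that a perfect field whose absolute Galois group is topologically finitely generated is automatically of type $\tF$. There is no real obstacle here beyond checking that the hypotheses of Proposition \ref{P-Topfg} propagate through the iteration, which they do because characteristic $0$ is preserved under passing to a Laurent series field.
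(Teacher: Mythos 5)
Your proof is correct and matches the paper's approach: the paper obtains the corollary simply by iterating Proposition \ref{P-Topfg}, which is exactly the induction you spell out, and the final step (perfect field with topologically finitely generated $G_L$ is of type $\tF$) is the same appeal to Serre's Proposition 9.
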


\vskip3mm

\noindent {\bf Remark 2.3.} Let us point out that while the examples of fields of type $\tF$ given in \cite[Ch. III, \S4.2]{SerreGC} have (virtual) cohomological dimension $\leq 2$, the preceding proposition and corollary allow us to construct such fields of arbitrary (finite) cohomological dimension.

\addtocounter{thm}{1}

\subsection{Fields of type $\Fpm$} We would first like to elaborate on the relationship between conditions $\tF$ and $\Fpm$ that was mentioned in \S\ref{S-1}.

%We would first like to elaborate on the relationship between conditions $\tF$ and $\Fpm$. The precise statement is given by the next result.

\begin{lemma}\label{L-CondF}
Suppose $K$ is a (perfect) field of type $\tF.$ Then $K$ satisfies $\Fpm$ for all $m$ prime to ${\rm char}~K.$
\end{lemma}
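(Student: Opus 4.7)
The plan is to reduce $(F'_m)$ for $K$ to a finiteness statement for a Galois cohomology group over each finite separable extension $L/K$, and then to invoke either Serre's Theorem 4 (cited in the introduction) or a direct counting argument based on condition $(\mathrm{F})$.

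First, I would record the standard fact that $(\mathrm{F})$ is inherited by finite separable extensions. If $G = G_K$ satisfies $(\mathrm{F})$ and $H \leq G$ is open of index $d$ (so $H = G_L$ for some finite separable $L/K$), then any open subgroup of $H$ of index $m$ is open in $G$ of index $dm$; by $(\mathrm{F})$ applied to $G$ there are only finitely many such subgroups. Hence $H$ also satisfies $(\mathrm{F})$, so $L$ is again of type $(\mathrm{F})$.

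Next, for any such $L$ and any $m$ prime to $\mathrm{char}\,K$, the Kummer exact sequence
$$
1 \longrightarrow \mu_m \longrightarrow \mathbb{G}_m \xrightarrow{\;m\;} \mathbb{G}_m \longrightarrow 1
$$
of $G_L$-modules, together with Hilbert 90 ($H^1(L,\mathbb{G}_m)=0$), yields a natural isomorphism
$$
L^{\times}/(L^{\times})^m \;\simeq\; H^1(L,\mu_m).
$$
So it suffices to show that $H^1(L,\mu_m)$ is finite for each finite separable $L/K$.

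Finally, since $m$ is prime to $\mathrm{char}\,K$, the group scheme $\mu_m$ is a finite (étale) linear algebraic group over $L$. Because $L$ is of type $(\mathrm{F})$ by the first step, Serre's theorem quoted in \S\ref{S-1} (\cite[Ch.~III, \S4.3, Theorem~4]{SerreGC}) gives the finiteness of $H^1(L,\mu_m)$ and we are done. If one prefers an argument that does not appeal to Serre's theorem, one can argue directly: the inertia-type subgroup $U = G_{L(\mu_m)}$ is open in $G_L$ of index dividing $\varphi(m)$ and acts trivially on $\mu_m$; a continuous $1$-cocycle $c \colon G_L \to \mu_m$ restricted to $U$ is a homomorphism with image in $\mu_m$, hence $\ker(c|_U)$ is open in $G_L$ of index dividing $m\,\varphi(m)$. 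By $(\mathrm{F})$ applied to $L$, there are only finitely many such open subgroups, and for each one, only finitely many cocycles, so $Z^1(G_L,\mu_m)$ (and \emph{a fortiori} $H^1$) is finite.

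I do not anticipate a serious obstacle; the only subtlety is the reduction step that guarantees condition $(\mathrm{F})$ passes from $K$ to $L$, which is essentially formal. The main point is the dictionary $L^{\times}/(L^{\times})^m \simeq H^1(L,\mu_m)$, after which the hypothesis on $G_K$ (inherited by $G_L$) trivially bounds the number of cohomology classes.
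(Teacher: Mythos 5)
Your proof is correct and follows essentially the same strategy as the paper: reduce $(\mathrm{F}'_m)$ to finiteness of $H^1(L,\mu_m)$ for each finite separable $L/K$ via Kummer theory, observe that $(\mathrm{F})$ passes to $L$, and then adjoin $m$-th roots of unity to trivialize the Galois action. The paper finishes by noting that $H^1(L(\zeta_m),\mu_m)=\Hom_{\mathrm{cont}}(G_{L(\zeta_m)},\Z/m\Z)$ is finite by $(\mathrm{F})$ and then applies inflation--restriction; your primary route instead cites Serre's Theorem~4 wholesale (which the paper already quotes in \S1, so this is legitimate but a bigger hammer than needed), while your alternative cocycle-counting argument is essentially a hands-on rederivation of that theorem in the commutative case and matches the paper's spirit closely. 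Both versions are sound.
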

\begin{proof}
It is clear from the definitions that if $K$ is of type $\tF$, then so is any finite separable extension $L/K$. Thus, it suffices to show that for any $m$ prime to $\text{char}~K$, the group
$$
H^1 (K, \Z / m \Z (1)) = K^{\times}/ {K^{\times}}^m
$$
is finite. Let $L = K(\zeta_m)$, where $\zeta_m$ is a primitive $m$-th root of unity. Then
$$
H^1 (L, \Z / m \Z (1)) = \Hom_{\text{cont}} (G_L, \Z / m \Z),
$$
which is finite since $L$ is of type $\tF$. The finiteness of $H^1 (K, \Z / m \Z (1))$ then follows from the inflation-restriction sequence
$$
0 \to H^1 (\Ga (L/K), \Z / m \Z (1)) \to H^1 (K, \Z / m \Z (1)) \to H^1 (L, \Z / m \Z (1)).
$$
\end{proof}

\vskip2mm

\noindent {\bf Remark 2.5.} It follows immediately from the definition that if $K$ is of type $\Fpm$, then $K$ is automatically of type $\mathrm{(F'_{\ell})}$ for every $\ell \vert m.$

\addtocounter{thm}{1}

\vskip2mm

As we mentioned previously, condition $\Fpm$ provides greater flexibility than $\tF$ in certain common situations. For instance, whereas $K(\!(t)\!)$, for $K$ a finite field of characteristic $p$, fails to be of type $\tF$, Corollary \ref{C-FmLS} below implies that it \emph{does} satisfy $\Fpm$ for all $m$ relatively prime to $p$. Quite generally, we have the following statement.
%Indeed, suppose $K$ is a finite field and $L = K((t))$ (in particular $K$ is of type $\tF$). Then, in view of the fact that the quotient group $L/\wp(L)$ is infinite, where $\wp(x) = x^p - x$, the field $L$ has infinitely many degree $p$ Artin-Schreier extensions. It follows that $G_L$ has infinitely many open subgroups of index $p$, and therefore does not satisfy $\tF$ (note that $L$ is not perfect). On the other hand, we will see in Proposition \ref{P-FmLS} below that this problem does not arise if we
%instead consider condition $\Fpm$ for $m$ relatively prime to ${\rm char}~K.$

\begin{prop}\label{P-FmLS}
Let $\mathscr{K}$ be a complete discretely valued field with residue field $\mathbf{k}$. If $m$ is prime to $\mathrm{char}\:
\mathbf{k}$ and $\mathbf{k}$ is of type $\Fpm$, then $\mathscr{K}$ is also of type $\Fpm.$
\end{prop}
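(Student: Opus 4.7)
The plan is to reduce the question to the residue field via the isomorphism $(*)$ already established in the excerpt, after first separating out the contribution from the uniformizer. The main obstacle will be the possible inseparability of the residue field extension, which must be dealt with separately since condition $\Fpm$ is formulated only for separable extensions.

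First I would take an arbitrary finite separable extension $\mathscr{L}/\mathscr{K}$. Since $\mathscr{K}$ is complete and discretely valued, $\mathscr{L}$ inherits a unique discrete valuation making it complete (hence henselian), with residue field $\mathbf{l}$, which is a finite extension of $\mathbf{k}$. Choosing a uniformizer $\pi \in \mathscr{L}$ yields a splitting $\mathscr{L}^{\times} \cong \mathscr{U}_{\mathscr{L}} \times \Z$, so that
$$
\mathscr{L}^{\times}/{\mathscr{L}^{\times}}^m \ \cong \ \mathscr{U}_{\mathscr{L}}/\mathscr{U}_{\mathscr{L}}^m \ \oplus \ \Z/m\Z.
$$
Since $m$ is prime to $\mathrm{char}\:\mathbf{l} = \mathrm{char}\:\mathbf{k}$, the observation $(*)$ applied to $\mathscr{L}$ gives $\mathscr{U}_{\mathscr{L}}/\mathscr{U}_{\mathscr{L}}^m \simeq \mathbf{l}^{\times}/{\mathbf{l}^{\times}}^m$. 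Thus the problem is reduced to proving that $\mathbf{l}^{\times}/{\mathbf{l}^{\times}}^m$ is finite.

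Next I would handle the residue field. Let $\mathbf{l}_0$ denote the separable closure of $\mathbf{k}$ in $\mathbf{l}$, so that $\mathbf{l}_0/\mathbf{k}$ is finite separable and $\mathbf{l}/\mathbf{l}_0$ is purely inseparable of some degree $p^r$, where $p = \mathrm{char}\:\mathbf{k}$ (the case $p = 0$ being trivial, as then $\mathbf{l} = \mathbf{l}_0$). By the hypothesis that $\mathbf{k}$ is of type $\Fpm$, the group $\mathbf{l}_0^{\times}/{\mathbf{l}_0^{\times}}^m$ is finite. Since $\gcd(m, p^r) = 1$, choose $u, v \in \Z$ with $um + vp^r = 1$; then for any $x \in \mathbf{l}^{\times}$ we have
$$
x \ = \ (x^u)^m \cdot (x^{p^r})^v \ \equiv \ (x^{p^r})^v \pmod{{\mathbf{l}^{\times}}^m},
$$
and $(x^{p^r})^v \in \mathbf{l}_0^{\times}$ since $\mathbf{l}^{p^r} \subseteq \mathbf{l}_0$. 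Hence the natural map $\mathbf{l}_0^{\times}/{\mathbf{l}_0^{\times}}^m \to \mathbf{l}^{\times}/{\mathbf{l}^{\times}}^m$ is surjective, and the finiteness of the former yields that of the latter.

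Combining the two steps shows $\mathscr{L}^{\times}/{\mathscr{L}^{\times}}^m$ is finite for every finite separable extension $\mathscr{L}/\mathscr{K}$, which is precisely condition $\Fpm$ for $\mathscr{K}$. The main subtlety, as noted, is the potential inseparability of $\mathbf{l}/\mathbf{k}$; but since $m$ is coprime to the residue characteristic, raising to the $p^r$-th power is an isomorphism on $m$-torsion quotients of multiplicative groups of characteristic $p$ fields, which neutralizes the inseparable part.
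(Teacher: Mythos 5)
Your proof is correct and follows essentially the same route as the paper: split $\mathscr{L}^{\times}$ via a uniformizer, use $(*)$ to transfer the unit part to the residue field, and then deal with the possibly inseparable residue extension $\mathbf{l}/\mathbf{k}$ by exploiting that $m$ is prime to $p$. The only cosmetic difference is that the paper handles the inseparable step by citing Lemma \ref{L-Insep} (an injectivity argument via $x \mapsto x^{p^{\alpha}}$), whereas you re-derive it inline by showing surjectivity of $\mathbf{l}_0^{\times}/{\mathbf{l}_0^{\times}}^m \to \mathbf{l}^{\times}/{\mathbf{l}^{\times}}^m$ using B\'ezout --- both are equivalent one-line observations.
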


In particular, for the field of Laurent series, we have

\begin{cor}\label{C-FmLS}
Let $K$ be a field and $m \geq 1$ be an integer prime to ${\rm char}~K.$ If $K$ is of type $\Fpm$, then so is $L = K(\!(t)\!).$
\end{cor}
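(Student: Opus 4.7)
The plan is to deduce the corollary directly from Proposition~\ref{P-FmLS} by verifying that $L = K(\!(t)\!)$ satisfies its hypotheses. First, $L$ is by construction complete with respect to the $t$-adic discrete valuation (it is the completion of $K(t)$ at this valuation, and indeed every formal Laurent series is already its own limit). Second, the reduction map sending a Laurent series $\sum_{n \geq n_0} a_n t^n$ to its constant coefficient $a_0$ identifies the residue field $\mathbf{k}$ of $L$ with $K$ itself. Consequently $\mathrm{char}\:\mathbf{k} = \mathrm{char}\:K$, so the assumption that $m$ is prime to $\mathrm{char}\:K$ gives precisely the hypothesis on $m$ required to invoke the proposition.

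With these identifications in place, Proposition~\ref{P-FmLS} applies with $\mathscr{K} = L$ and $\mathbf{k} = K$ and directly yields that $L$ is of type $\Fpm$. There is essentially no obstacle here, as all of the substantive work is absorbed into the proof of Proposition~\ref{P-FmLS}; the corollary is simply a specialization of the proposition to the case of a formal Laurent series field. As a bonus, iterating the corollary---applied successively to $K$, then to $K(\!(t_1)\!)$, then to $K(\!(t_1)\!)(\!(t_2)\!)$, and so on---immediately yields that the iterated Laurent series field $K(\!(t_1)\!) \cdots (\!(t_r)\!)$ is of type $\Fpm$ for every $r \geq 1$, since at each stage the characteristic is preserved and the previous field is of type $\Fpm$ by the inductive hypothesis.
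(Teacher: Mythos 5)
Your proof is correct and matches the paper exactly: the corollary is presented there as an immediate specialization of Proposition~\ref{P-FmLS} to the complete discretely valued field $\mathscr{K} = K(\!(t)\!)$ with residue field $\mathbf{k} = K$, which is precisely your argument. The iteration remark at the end likewise agrees with the paper's subsequent observation about $K(\!(t_1)\!)\cdots(\!(t_r)\!)$.
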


Thus, if, for example, $K$ is $\C$, $\R$, a $p$-adic field, or a finite field of characteristic $p$ prime to $m$, then $L = K(\!(t_1)\!)\cdots (\!(t_r)\!)$ is of type $\Fpm$ for any $r \geq 1.$

Now, before proceeding to the proof of Proposition \ref{P-FmLS}, let us observe that while we restricted ourselves to finite {\it separable} extensions in the definition of $\Fpm$, the property described there actually holds for {\it all} finite extensions.

\begin{lemma}\label{L-Insep}
If a field $K$ is of type $\Fpm$ for $m$ prime to $\mathrm{char}\: K$, then the quotient $L^{\times}/{L^{\times}}^m$ is finite for \emph{any} (not necessarily separable) finite extension $L/K$.
\end{lemma}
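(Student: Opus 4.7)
The plan is to reduce to the separable case by factoring $L/K$ through the maximal separable subextension and exploiting the hypothesis that $\gcd(m,\mathrm{char}\,K)=1$.

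More precisely, suppose $\mathrm{char}\,K = p > 0$ (the characteristic $0$ case being trivial, since then every finite extension is separable). Let $L_s$ denote the separable closure of $K$ inside $L$, so that $L_s/K$ is a finite separable extension and $L/L_s$ is purely inseparable, say of degree $p^k$. By the hypothesis $\Fpm$, applied to the extension $L_s/K$, the group $L_s^{\times}/{L_s^{\times}}^m$ is finite.

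I will now argue that the natural homomorphism
\[
\iota \colon L_s^{\times}/{L_s^{\times}}^m \longrightarrow L^{\times}/{L^{\times}}^m
\]
induced by the inclusion $L_s \hookrightarrow L$ is surjective, which together with the previous step will give the required finiteness. Take $y \in L^{\times}$. Since $L/L_s$ is purely inseparable of degree $p^k$, there exists $e \geq 0$ (one may in fact take $e = k$) with $y^{p^e} \in L_s^{\times}$. Because $m$ is prime to $p$, the integers $m$ and $p^e$ are coprime, so we may choose $a, b \in \Z$ with $am + bp^e = 1$. Then
\[
y \;=\; y^{am + b p^e} \;=\; (y^a)^m \cdot (y^{p^e})^b,
\]
and since $(y^{p^e})^b \in L_s^{\times}$, the class of $y$ in $L^{\times}/{L^{\times}}^m$ lies in the image of $\iota$. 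Surjectivity follows, and hence $L^{\times}/{L^{\times}}^m$ is finite.

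There is no real obstacle here: the only subtlety is ensuring that the purely inseparable part can be absorbed into $m$-th powers modulo an element of $L_s^{\times}$, which is exactly what the coprimality of $m$ and $p$ allows via the Bezout identity.
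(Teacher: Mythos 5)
Your proof is correct and follows essentially the same route as the paper's: both reduce to the maximal separable subextension and exploit the coprimality of $m$ and $p$ to pass through the purely inseparable layer. The only cosmetic difference is that you prove surjectivity of the map $L_s^{\times}/{L_s^{\times}}^m \to L^{\times}/{L^{\times}}^m$ induced by inclusion, whereas the paper proves injectivity of the map $L^{\times}/{L^{\times}}^m \to F^{\times}/{F^{\times}}^m$, $x \mapsto x^{p^{\alpha}}$; either suffices.
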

\begin{proof}
We can assume that $p = \mathrm{char}\: K > 0$. Let $L/K$ be an arbitrary finite extension, and $F$ be its maximal separable subextension. Then there exists $\alpha \geqslant 0$ such that $(L)^{p^{\alpha}} \subset F$. On the other hand,  since $K$ is of type $\Fpm$, the quotient $F^{\times}/{F^{\times}}^m$ is finite. So, it suffices to show that the map
\begin{equation}\label{E:A-Inj}
L^{\times}/{L^{\times}}^m \longrightarrow F^{\times}/{F^{\times}}^m, \ \ x{L^{\times}}^m \mapsto x^{p^{\alpha}} {F^{\times}}^m,
\end{equation}
is injective. But the composite map
$$
L^{\times}/{L^{\times}}^m \longrightarrow F^{\times}/{F^{\times}}^m \longrightarrow L^{\times}/{L^{\times}}^m,
$$
where the second map is induced by the identity embedding $F^{\times} \hookrightarrow L^{\times}$, amounts to raising to the power $p^{\alpha}$ in $L^{\times}/{L^{\times}}^m$. Since the latter has exponent $m$, which is prime to $p^{\alpha}$, this map is injective, and the injectivity of (\ref{E:A-Inj}) follows.
\end{proof}

\noindent {\it Proof of Proposition \ref{P-FmLS}.}
Any finite extension $\mathscr{L}$ of $\mathscr{K}$ is clearly henselian, and its residue field $\mathbf{l}$ is a finite extension of $\mathbf{k}$.
So, since $\mathbf{k}$ is of type $\Fpm$, Lemma \ref{L-Insep} implies that the quotient $\mathbf{l}^{\times}/{\mathbf{l}^{\times}}^m$ is finite.
Applying $(*)$ from the proof of Proposition \ref{P-Topfg} to $\mathscr{L}$, we see that if $\mathscr{U}$ denotes the group of units in $\mathscr{L}$, then the quotient
$\mathscr{U}/\mathscr{U}^m$ is finite. A choice of uniformizer $\pi \in \mathscr{L}$ then yields an isomorphism
$$
\mathscr{L}^{\times} \simeq \langle \pi \rangle \times \mathscr{U},
$$
from which we conclude that the quotient $\mathscr{L}^{\times}/{\mathscr{L}^{\times}}^m$ is also finite, as required. $\Box$

\vskip2mm

We end this section with the following example of fields of characteristic 0 that satisfy $\Fpm$ for some, but not all, values of $m.$

\vskip2mm

\noindent {\bf Example 2.9.} Let $K$ be a field of characteristic 0 such that $G_K$ is an infinitely generated pro-$2$ group (e.g., we can take $K = \overline{\Q}^{\mathscr{G}_2}$, where $\mathscr{G}_2$ is a Sylow 2-subgroup of $G_{\Q}$). Clearly, $K$ is not of type ${\rm (F_2)}$, but is of type $\Fpm$ for any odd $m$ (in particular, $K$ is not of type $\tF$). By Corollary \ref{C-FmLS}, we also see that these properties are shared by $K(\!(t_1)\!)\cdots(\!(t_r)\!)$ for any $r \geq 1.$

\section{Proof of Theorem \ref{T-FieldF'}.}\label{S-2a}

In this section, we establish a finiteness result for the cohomology of finite Galois modules over fields of type $\Fpm$ and indicate some consequences for the \'etale cohomology of smooth algebraic varieties and the Galois cohomology of algebraic tori

%our main finiteness result for fields of type $\Fpm$ and deduce some finiteness properties of \'etale cohomology and the Galois cohomology of algebraic tori.

%a finiteness statement for \'etale cohomology, and mention some consequences for the finiteness of the Galois cohomology of algebraic tori.

One of the principal ingredients in the proof of Theorem \ref{T-FieldF'} is the Norm Residue Isomorphism Theorem (formerly the Bloch-Kato Conjecture), established in the work of Rost \cite{Rost}, Voevodsky (\cite{V1}, \cite{V2}), Weibel \cite{Weib}, and others. We briefly recall the set-up. For a field $F$ and integer
$n > 1$, the {\it $n$-th Milnor $K$-group} $K_n^M(F)$ of $F$ is defined as the quotient of the $n$-fold tensor product $F^{\times} \otimes_{\Z} \cdots \otimes_{\Z} F^{\times}$ by the subgroup generated by elements $a_1 \otimes \cdots \otimes a_n$ such that $a_i + a_j = 1$ for some $1 \leq i < j \leq n.$ For $a_1, \dots, a_n \in F^{\times}$, the image of $a_1 \otimes \cdots \otimes a_n$ in $K^M_n(F)$, denoted $\{ a_1, \dots, a_n \}$, is called a {\it symbol}; thus, $K^M_n (F)$ is generated by symbols.
By convention, one sets $K^M_0 (F) = \Z$ and $K^M_1 (F) = F^{\times}.$ Furthermore, for any integer $m$ prime to $\text{char}~F$, the Galois symbol yields a group homomorphism
$$
h^n_{F,m} \colon K_n^M (F) \to H^n (F, \Z / m \Z (n)),
$$
which obviously factors through $K_n^M(F)/m  := K_n^M(F)/m \cdot K_n^M(F)$ (see \cite[\S 4.6]{GiSz} for the relevant definitions). We will write
$\{a_1, \dots, a_n\}_m$ for the image of $\{a_1, \dots, a_n\}$ in $K_n^M(F)/m.$ The main result is
\begin{thm}\label{T-BlochKato}
For any field $F$ and any integer $m$ prime to $\text{char}~F$, the Galois symbol induces an isomorphism
$$
K_n^M(F)/m \stackrel{\sim}{\longrightarrow} H^n (F, \Z / m \Z (n))
$$
for all $n \geq 0$.
\end{thm}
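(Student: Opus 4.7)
The plan is to follow the Rost--Voevodsky--Weibel strategy; given the depth of this result, what follows is an outline of the overall architecture rather than a self-contained sketch. The first step is to reduce to the case where $m = \ell$ is a prime distinct from $\mathrm{char}~F$: the long exact Bockstein sequences attached to
\[
0 \to \mu_{\ell} \to \mu_{\ell^{r+1}} \to \mu_{\ell^r} \to 0
\]
are compatible with the Galois symbol and the natural filtration on $K^M_n(F)/\ell^{r+1}$, and the primary decomposition handles composite $m$. The cases $n = 0$ (trivial) and $n = 1$ (Hilbert 90 / Kummer theory) are classical, and $n = 2$ is the theorem of Merkurjev--Suslin. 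Both injectivity and surjectivity of $h^n_{F,\ell}$ must be tracked separately throughout the induction.

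For general $n$, I would not attempt to prove the isomorphism directly, but instead pass through the equivalence --- due to Suslin--Voevodsky and Geisser--Levine --- between the Norm Residue Isomorphism Theorem in weight $n$ and the \emph{Beilinson--Lichtenbaum} statement $BL(n,\ell)$, which asserts that the change-of-topology morphism from motivic cohomology to \'etale cohomology with $\Z/\ell$ coefficients is an isomorphism in the range $p \leq q \leq n$. This moves the problem into Voevodsky's triangulated category of motives, where the motivic Steenrod algebra and the slice filtration become available.

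The inductive core is then: assuming $BL(n-1,\ell)$, one proves $BL(n,\ell)$ by showing that any putative nontrivial class in the cokernel or kernel of $h^n_{F,\ell}$ can be killed after passing to a generic splitting variety. Given a symbol $\{a_1,\dots,a_n\}_\ell$, Rost's construction produces a smooth projective \emph{norm variety} $X$ of dimension $\ell^{n-1} - 1$ over $F$ which splits the symbol and which admits a direct motivic summand --- the \emph{Rost motive} --- with prescribed motivic cohomology. Combining the motivic cohomology of $X$ and of the simplicial \v{C}ech variety $\check{C}(X)$ with the action of motivic Steenrod operations on the slice tower of the mod-$\ell$ motivic Eilenberg--MacLane spectrum, one propagates the inductive hypothesis and deduces the required vanishing.

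The main obstacles are precisely the three deepest inputs: (i) the construction of norm varieties with the correct numerical invariants and the existence of the Rost motive as a summand (Rost); (ii) the construction of motivic Steenrod operations at $\ell$ and the computation of the mod-$\ell$ motivic Steenrod algebra, together with its action on the motivic cohomology of the relevant simplicial schemes (Voevodsky); and (iii) the separate technical treatment required for $\ell = 2$, completed by Voevodsky and later clarified by Weibel's patch. A full account lies well beyond the scope of this note, and we refer to \cite{Rost}, \cite{V1}, \cite{V2}, and \cite{Weib} for the details, invoking the theorem here as a black box.
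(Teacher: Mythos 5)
Your proposal is, in substance, exactly what the paper does: Theorem \ref{T-BlochKato} is not proved in the text but is invoked as an external deep result with citations to Rost, Voevodsky, and Weibel, and your outline of the Rost--Voevodsky--Weibel architecture (reduction to prime $\ell$, the Beilinson--Lichtenbaum reformulation, norm varieties and Rost motives, motivic Steenrod operations) is a faithful summary of those references before you likewise defer to them as a black box. No discrepancy to report.
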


\vskip2mm

We now turn to

\vskip2mm

\noindent {\it Proof of Theorem \ref{T-FieldF'}.} First, we note that if $K$ is of type $\Fpm$, then the groups $K_n^M(L)/m$ are finite for all finite extensions $L/K$ and all $n \geq 0.$ Indeed, for each $n$, the group $K_n^M(L)/m$ is generated by symbols $\{ a_1, \ldots , a_n \}_m$ with $a_i \in L^{\times}$, and each such symbol is annihilated by $m$. On the other hand, since the quotient $L^{\times}/{L^{\times}}^m$ is finite by Lemma \ref{L-Insep}, there are only finitely many symbols of length $n$, and the finiteness of $K_n^M(L)/m$ follows.

Applying Theorem \ref{T-BlochKato} and Remark 2.5, we see that the cohomology groups $H^n(L , \Z/\ell \Z(n))$ are finite for all $\ell \vert m$ and all $n \geq 0$. Now, to deal with an arbitrary module $A$, we pick a finite extension $L/K$ that contains the $m$-th roots of unity and such that $G_L$ acts trivially on $A$. Then for any $\ell \vert m$ and any $n \geq 0$, the Galois $L$-module  $\Z/\ell\Z(n)$ is isomorphic to the trivial module $\Z/\ell\Z$, so the above discussion implies the finiteness of the cohomology groups $H^n(L , \Z/\ell \Z)$. On the other hand, in view of the fact that $mA = 0$ and our choice of $L$, the Galois $L$-module $A$ is isomorphic to a direct sum of some $\Z/\ell\Z$'s. Thus, the cohomology groups $H^n(L , A)$ are finite for all $n \geq 0$. The finiteness of the cohomology groups $H^n(K , A)$ now follows from the Hochschild-Serre spectral sequence
$$
E_2^{i,j} = H^i(\Ga(L/K) , H^j(L , A)) \Longrightarrow H^{i+j}(K , A).
$$ \hfill $\Box$

\vskip2mm

As a first application of Theorem \ref{T-FieldF'}, let us mention the following statement for \'etale cohomology, which will be needed in our discussion of unramified cohomology.

\begin{cor}\label{C-FinEt}
Suppose $K$ is a field and $m$ is a positive integer prime to ${\rm char}~K.$ If $K$ is of type $\Fpm$, then for any smooth geometrically integral algebraic variety $X$ over $K$, the \'etale cohomology groups $\he^i(X, \Z / m \Z (j))$ are finite for all $i \geq 0$ and all $j$.
\end{cor}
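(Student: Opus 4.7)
The plan is to run the Hochschild--Serre spectral sequence
$$E_2^{p,q} = H^p(G_K, \he^q(\overline{X}, \Z/m\Z(j))) \Longrightarrow \he^{p+q}(X, \Z/m\Z(j)),$$
where $\overline{X} = X \times_K \overline{K}$, and combine a standard finiteness statement for geometric étale cohomology with Theorem \ref{T-FieldF'}.

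First I would show that each abutment-contributing term $\he^q(\overline{X}, \Z/m\Z(j))$ is finite. Indeed, $\overline{X}$ is a smooth, separated, finite-type scheme over the algebraically closed field $\overline{K}$, and since $m$ is prime to $\mathrm{char}~K$, the sheaf $\mu_m^{\otimes j}$ is a constructible sheaf of finite abelian groups of order invertible on $\overline{X}$. Deligne's finiteness theorem for such sheaves on finite-type schemes over an algebraically closed field (SGA 4$\tfrac{1}{2}$) then ensures that $\he^q(\overline{X}, \Z/m\Z(j))$ is finite for every $q$ and vanishes for $q > 2 \dim X$.

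Next, each of these groups is killed by $m$ and carries a continuous $G_K$-action, hence is a finite $G_K$-module annihilated by $m$. Applying Theorem \ref{T-FieldF'}, which is available because $K$ is of type $\Fpm$, yields the finiteness of
$$H^p\bigl(G_K, \he^q(\overline{X}, \Z/m\Z(j))\bigr)$$
for all $p, q \geq 0$.

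Finally, for a fixed $i \geq 0$, the abutment $\he^i(X, \Z/m\Z(j))$ admits a finite filtration whose associated graded pieces are subquotients of the finitely many groups $E_\infty^{p,q}$ with $p + q = i$. Each $E_\infty^{p,q}$ is a subquotient of the finite group $E_2^{p,q}$ above, so $\he^i(X, \Z/m\Z(j))$ is finite. The only non-formal input is the first step, namely the finiteness of the geometric étale cohomology of a possibly non-proper smooth variety, which is the main obstacle and is precisely what forces the appeal to Deligne's finiteness theorem rather than the more elementary statement for proper varieties.
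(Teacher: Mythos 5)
Your proposal is correct and follows essentially the same route as the paper: finiteness of the geometric \'etale cohomology of $\overline{X}$ (the paper cites \cite[Expos\'e XVI, Th\'eor\`eme 5.2]{SGA4} where you invoke Deligne's finiteness theorem, but these serve the same purpose), then Theorem \ref{T-FieldF'} applied to the finite $m$-torsion $G_K$-modules $\he^q(\overline{X}, \Z/m\Z(j))$, then the Hochschild--Serre spectral sequence.
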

\begin{proof}
Let $\bar{X} = X \times_K \overline{K}.$ It is well-known that $\he^i (\bar{X}, \Z / m \Z (j))$ are finite $m$-torsion groups for all $i \geq 0$ and all $j$ (see \cite[Expos\'e XVI, Th\'eor\`eme 5.2]{SGA4}). Consequently, the groups $H^p (K, \he^q (\bar{X}, \Z / m \Z (j))$ are finite for all $p, q \geq 0$ and all $j$ by Theorem \ref{T-FieldF'}. Our claim now follows from the
Hochschild-Serre spectral sequence
$$
E_2^{p,q} = H^p (K, H^q_{\text{\'et}} (\bar{X}, \Z / m \Z (j))) \Rightarrow H^{p+q}_{\text{\'et}} (X, \Z / m \Z (j)).
$$
\end{proof}

Next, suppose that $T$ is an algebraic torus defined over a field $K$ and
%For an integer $\ell \geq 1$, we set
%$$
%T[\ell] = \ker\left(T \stackrel{x \mapsto x^{\ell}}{\longrightarrow} T\right).
%$$
%We will also
denote by $K_T$ the minimal splitting field of $T$ inside a fixed separable closure $\overline{K}$ of $K$.

\begin{cor}\label{C-FinTori}
Let $K$ be a field of type $\Fpm$. Then for an algebraic $K$-torus $T$, we have the following:
%and $T$ be an algebraic torus defined over $K$.
%Suppose that $K$ is of type $\Fpm.$
\vskip2mm

\noindent $\mathrm{(a)}$ \parbox[t]{16cm}{The $\ell$-torsion subgroups ${}_{\ell}H^i(K, T)$ are finite for all $i \geq 0$ and all $\ell \vert m.$}

\vskip2mm

\noindent $\mathrm{(b)}$ \parbox[t]{16cm}{If $[K_T:K]$ divides $m$, then $H^1(K,T)$ is finite.}

\end{cor}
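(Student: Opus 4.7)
The plan is to handle (a) with the standard Kummer sequence for $T$, which reduces the $\ell$-torsion of $H^i(K,T)$ to a quotient of $H^i(K, T[\ell])$, a finite-Galois-module cohomology controlled by Theorem \ref{T-FieldF'}; then to deduce (b) by proving that $H^1(K, T)$ is annihilated by $[K_T:K]$, so that $H^1(K, T) = {}_m H^1(K, T)$ and is finite by (a) with $\ell = m$.

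For (a), fix $\ell \mid m$. Since $\ell$ is prime to $\mathrm{char}~K$, multiplication by $\ell$ is surjective on $T(\overline{K})$ (equivalently, on $T$ as an \'etale sheaf), so the Kummer sequence
$$
0 \to T[\ell] \to T(\overline{K}) \xrightarrow{\ell} T(\overline{K}) \to 0
$$
of discrete $G_K$-modules is exact. The associated long exact sequence in Galois cohomology produces surjections
$$
H^i(K, T[\ell]) \twoheadrightarrow {}_{\ell} H^i(K, T)
$$
for every $i \geq 0$. Now $T[\ell]$ is a finite $G_K$-module of order $\ell^{\dim T}$ that is annihilated by $\ell$, hence by $m$, so Theorem \ref{T-FieldF'} gives the finiteness of $H^i(K, T[\ell])$, and therefore of ${}_{\ell} H^i(K, T)$.

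For (b), I would show that $H^1(K, T)$ is itself annihilated by $[K_T:K]$. Applying inflation-restriction to the normal subgroup $G_{K_T} \subseteq G_K$ yields
$$
0 \to H^1(\Ga(K_T/K), T(K_T)) \to H^1(K, T) \to H^1(G_{K_T}, T),
$$
and since $T$ becomes split over $K_T$, so that $T \cong \mathbb{G}_m^{\dim T}$ there, Hilbert 90 gives $H^1(G_{K_T}, T) = 0$. Hence
$$
H^1(K, T) \cong H^1(\Ga(K_T/K), T(K_T)),
$$
a cohomology group of the finite group $\Ga(K_T/K)$ of order $[K_T:K]$, and is therefore killed by $[K_T:K]$, which by hypothesis divides $m$. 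Consequently $H^1(K, T) = {}_{m} H^1(K, T)$, and the right-hand side is finite by part (a) applied with $\ell = m$. I see no real obstacle in this argument: it is simply a matter of stringing together the Kummer sequence, Hilbert 90, and Theorem \ref{T-FieldF'}.
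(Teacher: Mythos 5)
Your proof is correct and follows essentially the same route as the paper: the surjection $H^i(K,T[\ell])\twoheadrightarrow {}_{\ell}H^i(K,T)$ coming from the multiplication-by-$\ell$ (Kummer) sequence combined with Theorem \ref{T-FieldF'} for part (a), and Hilbert 90 plus inflation-restriction to identify $H^1(K,T)$ with $H^1(\Ga(K_T/K),T(K_T))$, hence a group killed by $[K_T:K]\mid m$, for part (b). The only cosmetic difference is that the paper invokes Remark 2.5 to apply Theorem \ref{T-FieldF'} with $\ell$ in place of $m$, whereas you note directly that $\ell T[\ell]=0$ implies $mT[\ell]=0$; both are fine.
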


%In particular, suppose $G$ is an absolutely almost simple algebraic group defined over $K$, $T \subset G$ is a maximal $K$-torus, and $\Phi = \Phi(G,T)$ is the root system of $G$ with respect to $T$. If the order $\vert {\rm Aut}(\Phi) \vert$ of the automorphism group of $\Phi$ divides $m$, then, in view of the natural injection $\Ga(K_T/K) \hookrightarrow {\rm Aut}(\Phi)$, part (b) implies that $H^1(K,T)$ is finite.

\begin{proof}

\noindent (a) First, by Remark 2.5, $K$ is of type $\mathrm{(F'_{\ell})}$ for every $\ell \vert m.$ Next, for every $i \geq 0$, the map
$$
\varphi_{\ell} \colon T \to T, \ \ \ x \mapsto x^{\ell}
$$
induces a surjective homomorphism $H^i(K, T[\ell]) \twoheadrightarrow {}_{\ell}H^i(K, T)$, where $T[\ell] = \ker \varphi_{\ell}.$ Since $T[\ell]$ is a finite $\ell$-torsion module, the groups $H^i(K, T[\ell])$ are finite by
%the finiteness of $H^i(K, T[\ell])$ follows immediately
Theorem \ref{T-FieldF'}, which then yields the finiteness of ${}_{\ell}H^i(K, T).$

\vskip2mm

\noindent (b) By Hilbert's Theorem 90, the group $H^1(K , T)$ can be identified with $H^1(K_T/K , T)$, hence is annihilated by $n = [K_T : K]$. On the other hand, since $n \vert m$, the group ${}_n H^1(K , T)$ is finite according to (a), and our assertion follows.

%we have $H^1(K,T) = H^1(K_T/K, T)$, and the latter is a group of exponent $n.$ It follows that there is a surjection $H^1(K, T[n]) \twoheadrightarrow H^1(K,T).$ As we just saw in the proof of part (a), since $n$ divides $m$, $H^1(K, T[n])$ is finite, and the finiteness of $H^1(K,T)$ follows.

\end{proof}

\noindent (We should point out that by contrast with part (b) of the corollary, the groups $H^i(K , T)$, for $i \geq 2$, may be infinite --- for instance, $K = \Q_p$ is of type $\tF$, hence also of type $\Fpm$ for any $m \geq 1$, but for the one-dimensional split torus $T = \mathbb{G}_m$, the group $H^2(K , T)$ is the Brauer group $\Br(K)$, which is infinite.)

%the Brauer group of $\Q_p$.)

%for i \geq 2 may be infinite (e.g. the Brauer group of Q_p).

We would like to mention one situation where one has the finiteness of $H^1(K , T)$ for all maximal $K$-tori of a semi-simple $K$-group $\mathcal{G}$ simultaneously.

%For maximal tori of semisimple algebraic groups, we have

\begin{cor}\label{C-Semisimple}
Let $\mathcal{G}$ be a semisimple algebraic group defined over a field $K$. Assume that the order $m$ of the automorphism group of the root system of $\mathcal{G}$ is prime to $\mathrm{char}\: K$ and that $K$ is of type $\Fpm$. Then for every maximal $K$-torus $T$ of $\mathcal{G}$, the group $H^1(K , T)$ is finite.
\end{cor}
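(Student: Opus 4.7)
The plan is to reduce the statement directly to Corollary \ref{C-FinTori}(b); concretely, it suffices to verify that the degree $[K_T : K]$ of the minimal splitting field of any maximal $K$-torus $T$ of $\mathcal{G}$ divides $m$.

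To this end, fix a maximal $K$-torus $T$ and let $\Phi = \Phi(\mathcal{G}_{\overline{K}}, T_{\overline{K}}) \subset X^*(T)$ denote the associated root system. Since $\mathcal{G}$ is semisimple and $T$ is maximal, $\Phi$ spans $X^*(T) \otimes_{\Z} \Q$. The absolute Galois group $G_K$ acts naturally on the character lattice $X^*(T)$, and because $\mathcal{G}$ (together with the set of roots sitting inside $X^*(T)$) is defined over $K$, this action permutes $\Phi$. One thus obtains a homomorphism
$$
\rho \colon G_K \longrightarrow \mathrm{Aut}(\Phi).
$$

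The key step is to identify $\ker \rho$ with $\Ga(\overline{K}/K_T)$. On the one hand, if $\sigma \in G_K$ acts trivially on $\Phi$, then, because $\Phi$ spans $X^*(T) \otimes_{\Z} \Q$, $\sigma$ acts trivially on $X^*(T)$, hence trivially on $T$, so $\sigma \in \Ga(\overline{K}/K_T)$. The reverse inclusion is immediate. Consequently, $\rho$ factors through an injection
$$
\Ga(K_T/K) \hookrightarrow \mathrm{Aut}(\Phi),
$$
and Lagrange's theorem gives $[K_T:K] \mid |\mathrm{Aut}(\Phi)| = m$. Combining this with the hypothesis that $K$ is of type $\Fpm$, Corollary \ref{C-FinTori}(b) yields the finiteness of $H^1(K,T)$.

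No real obstacle is anticipated here: the argument rests on the standard fact that the Galois representation on the character lattice of a maximal torus of a semisimple $K$-group factors through the finite group $\mathrm{Aut}(\Phi)$, after which the corollary is an immediate application of part (b) of Corollary \ref{C-FinTori}.
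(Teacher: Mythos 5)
Your proposal is correct and follows essentially the same route as the paper: the Galois action on the root system gives an injection $\Ga(K_T/K) \hookrightarrow \mathrm{Aut}(\Phi)$, so $[K_T:K]$ divides $m$ and Corollary \ref{C-FinTori}(b) applies. You have merely spelled out the identification of $\ker\rho$ with $\Ga(\overline{K}/K_T)$, which the paper leaves implicit.
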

Indeed, the Galois action on the root system $\Phi = \Phi(\mathcal{G} , T)$ yields an injective group homomorphism $\mathrm{Gal}(K_T/K) \hookrightarrow \mathrm{Aut}(\Phi)$. Thus, $[K_T : K]$ divides $m$, and our assertion follows from Corollary \ref{C-FinTori}(b).

%Indeed, this follows from Corollary \ref{C-FinTori}(b) by using the natural injection $\Ga(K_T/K) \hookrightarrow {\rm Aut}(\Phi),$ where $\Phi$ is the root system of $G$ with respect to $T$.

In view of Serre's finiteness results for the Galois cohomology of linear algebraic groups over fields of type $\tF$, it would be interesting to determine if the assumptions of Corollary \ref{C-Semisimple} already imply the finiteness of $H^1(K , \mathcal{G})$.

\section{Proof of Theorem \ref{T-MainThm}}\label{S-4}

In this section, we will use Bloch-Ogus theory together with Theorem \ref{T-FieldF'} to establish Theorem \ref{T-MainThm}.

\subsection{A brief review of Bloch-Ogus theory} We begin by recalling, for the reader's convenience, several key points of Bloch-Ogus \cite{BO} theory, which allows one to relate unramified cohomology to \'etale cohomology.

%study unramified cohomology using geometric techniques.

Let $F$ be an arbitrary field and $X$ a smooth algebraic variety over $F$. In this subsection, we will fix a positive integer $m$ prime to ${\rm char}~F.$ By considering the filtration by coniveau, Bloch and Ogus established the existence of the following cohomological first quadrant spectral sequence called the {\it coniveau spectral sequence}:
\begin{equation}\label{E-ConiveauSS}
E_1^{p,q}(X/F, \Z / m \Z (b)) = \bigoplus_{x \in X^{(p)}} H^{q-p}(\kappa(x), \Z / m \Z (b - p)) \Rightarrow \he^{p+q}(X, \Z / m \Z (b)),
\end{equation}
where $X^{(p)}$ denotes the set of points of $X$ of codimension $p$ and the groups on the left are the Galois cohomology groups of the residue fields $\kappa(x)$ (the original statement of Bloch-Ogus was actually given in terms of \'etale homology, with the above version obtained via absolute purity; for a derivation of this spectral sequence that avoids the use of \'etale homology, we refer the reader to \cite{CTHK}). This spectral sequence yields a complex
\begin{equation}\label{E-BOComplex}
E_1^{\bullet, q}(X/F, \Z / m \Z (b)),
\end{equation}
and it is well-known (see, e.g., \cite[Remark 2.5.5]{JSS}) that the differentials in (\ref{E-BOComplex}) coincide up to sign with the differentials in an analogous complex constructed by Kato \cite{Kato} using residue maps in Galois cohomology.

The fundamental result of Bloch and Ogus was the calculation of the $E_2$-term of (\ref{E-ConiveauSS}). To give the statement, we will need the following notation: let
%Returning now to the Bloch-Ogus complex, note that for each $x \in X$, the cohomology group $A = H^{q-p} (\kappa(x), \Z / m \Z(j - p))$ may be viewed as a constant sheaf on $\overline{\{ x \}}$. One can then consider its extension by zero $i_x A$ to all of $X$. We also recall that
$\mathcal{H}^q (\Z / m \Z (j))$
denote the Zariski sheaf on $X$ associated to the presheaf that assigns to an open $U \subset X$ the cohomology group $H^i_{\text{\'et}}(U, \Z / m \Z (j)).$
Bloch and Ogus showed that
$$
E_2^{p,q}(X/F, \Z/ m \Z (b)) = H^p (X, \mathcal{H}^q (\Z / m \Z (b)))
$$
(see \cite[Corollary 6.3]{BO}). The resulting (first quadrant) spectral sequence
\begin{equation}\label{E-BO}
E_2^{p,q}(X/F, \Z / m \Z (b)) = H^p (X, \mathcal{H}^q (\Z / m \Z (b))) \Rightarrow H^{p+q}_{\text{\'et}} (X, \Z/ m \Z (b))
\end{equation}
is usually referred to as the {\it Bloch-Ogus spectral sequence}. For ease of reference, we summarize several key points pertaining to (\ref{E-BO}).

\begin{prop}\label{P-BlochOgusSS}
The Bloch-Ogus spectral sequence $(\ref{E-BO})$ associated to a smooth irreducible algebraic variety $X$ over a field $F$ has the following properties:
\vskip1mm

\noindent {\rm (a)} $E_2^{p,q} = 0$ for $p > \dim X$ and all $q$;

\vskip1mm

\noindent {\rm (b)} $E_2^{p,q} = 0$ for $p > q$; and

\vskip1mm

\noindent {\rm (c)} \parbox[t]{16cm}{$E_2^{0, q} = H^0 (X, \mathcal{H}^q (\Z / m \Z (b)))$ coincides with the unramified cohomology $H^q_{\mathrm{ur}} (F(X), \Z / m \Z (b))$ with respect to the geometric places of $F(X)$.}

\end{prop}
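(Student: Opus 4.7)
The plan is to deduce all three assertions from two standard inputs: Grothendieck's vanishing theorem for sheaf cohomology on a noetherian space, and the stronger form of the Bloch--Ogus theorem asserting that the coniveau complex $E_1^{\bullet,q}(X/F, \Z/m\Z(b))$ is a flasque resolution of the Zariski sheaf $\mathcal{H}^q(\Z/m\Z(b))$ on $X$. This latter resolution is the content of the Gersten conjecture for \'etale cohomology proved in \cite{BO} (and revisited in \cite{CTHK}), and it is precisely what justifies the identification of the $E_2$-page of (\ref{E-BO}) with $H^p(X, \mathcal{H}^q(\Z/m\Z(b)))$ in the first place. Granting these inputs, the three items become short book-keeping exercises.

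For (a), since $X$ is a noetherian topological space of Krull dimension $d = \dim X$, Grothendieck's vanishing theorem gives $H^p(X, \mathcal{F}) = 0$ for every abelian Zariski sheaf $\mathcal{F}$ and every $p > d$; applied to $\mathcal{F} = \mathcal{H}^q(\Z/m\Z(b))$ this yields (a) at once. For (b), I would inspect the coniveau $E_1$-page directly: since $E_1^{p,q}$ is a direct sum of Galois cohomology groups in degree $q-p$, and Galois cohomology vanishes in negative degrees, one has $E_1^{p,q} = 0$ whenever $p > q$; because $E_2^{p,q}$ is a subquotient of $E_1^{p,q}$ the same vanishing passes to the $E_2$-page. (Equivalently, the Gersten resolution of $\mathcal{H}^q(\Z/m\Z(b))$ has length $q$, which forces its higher Zariski cohomology to vanish above degree $q$.)

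For (c), I would use the Gersten resolution to identify
\begin{equation*}
H^0\bigl(X, \mathcal{H}^q(\Z/m\Z(b))\bigr) \;=\; \ker\bigl(d_1^{0,q} \colon E_1^{0,q} \longrightarrow E_1^{1,q}\bigr).
\end{equation*}
Since $X$ is irreducible with generic point $\eta$ satisfying $\kappa(\eta) = F(X)$, the source of $d_1^{0,q}$ is $H^q(F(X), \Z/m\Z(b))$, while the target is $\bigoplus_{x \in X^{(1)}} H^{q-1}(\kappa(x), \Z/m\Z(b-1))$. The crucial input, already recorded in the paragraph preceding the proposition, is that up to sign the coniveau differential $d_1^{0,q}$ coincides componentwise with the Kato residue map $\partial_{v_x}^q$; once this is granted, the kernel above is by definition $\bigcap_{x \in X^{(1)}} \ker \partial_{v_x}^q = H^q_{\mathrm{ur}}(F(X), \Z/m\Z(b))$, proving (c). The principal conceptual obstacle in all three parts is therefore hidden in the machinery being quoted, namely the Gersten resolution and the Bloch--Ogus/Kato compatibility of differentials; once this input is available, the proposition itself reduces to formal manipulations.
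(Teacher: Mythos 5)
Your proof is correct; the paper offers no argument for this proposition (it is stated as a summary of standard facts from \cite{BO}, \cite{CTHK}, and \cite{JSS}), and your three arguments --- Grothendieck vanishing on the noetherian space $X$ for (a), the vanishing of the coniveau $E_1$-terms in negative Galois-cohomological degree (equivalently, the length of the Gersten resolution) for (b), and the identification $E_2^{0,q}=\ker d_1^{0,q}$ together with the Bloch--Ogus/Kato compatibility of differentials for (c) --- are exactly the standard ones underlying those references. No gaps.
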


\subsection{Finiteness of unramified cohomology.} We now turn to the proof of Theorem \ref{T-MainThm}.

First suppose that
$X = C$ is a smooth, geometrically integral curve over a field $F$. Then by Proposition \ref{P-BlochOgusSS}(a), we have $E_2^{p,q} = 0$ for $p \neq 0,1$ and all $q$. So, in view of part (c), we have a surjection
 %gives, for each $i \geq 1$, a short exact sequence
%$$
%0 \to E_2^{1, i-1} \to E^i \to E_2^{0,i} \to 0.
%$$
%(see, e.g., \cite[Ch. XV, \S 5, Proposition 5.5]{CartEil}).
%Applying Proposition \ref{P-BlochOgusSS}(c), we thus obtain a surjection
\begin{equation}\label{E-EtUnram}
H^i_{\text{\'et}}(C, \Z / m \Z (j)) \twoheadrightarrow H^i_{\text{ur}} (F(C), \Z / m \Z (j))
\end{equation}
for all $i \geq 1$, all $j$, and all $m$ prime to ${\rm char}~F$. Now, if $F = K$ is a field of type $\Fpm$, then the groups $H^i_{\text{\'et}}(C, \Z / m \Z (j))$ are finite by Corollary \ref{C-FinEt}. Consequently, from (\ref{E-EtUnram}), we obtain

%\vskip2mm

%Now, if $F = K$ is a field of type $\Fpm$, then (\ref{E-EtUnram}) and Corollary \ref{C-FinEt} yield

\begin{prop}\label{P-4.1}
Let $K$ be a field, $C$ a smooth, geometrically integral curve over $K$, and $m$ a positive integer prime to ${\rm char}~K.$ If $K$ is of type $\Fpm$, then the
unramified cohomology groups $H^i_{\mathrm{ur}} (K(C), \Z/ m \Z (j))$ are finite for all $i \geq 1$ and all $j$.
\end{prop}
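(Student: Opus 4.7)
The plan is to apply the Bloch--Ogus spectral sequence (\ref{E-BO}) to the smooth curve $C$ over $K$. Since $\dim C = 1$, Proposition \ref{P-BlochOgusSS}(a) forces $E_2^{p,q}(C/K, \Z/m\Z(j)) = 0$ for all $p \geq 2$, so the only potentially nonzero columns are $p = 0$ and $p = 1$. First I would observe that in such a two-column setup the differentials $d_r \colon E_r^{0,q} \to E_r^{r,q-r+1}$ for $r \geq 2$ all land in the vanishing region $p \geq 2$, and there are no incoming differentials into the $p = 0$ column since the spectral sequence is first quadrant; this yields $E_2^{0,q} = E_\infty^{0,q}$.

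The filtration on the abutment then produces a canonical surjection $\he^i(C, \Z/m\Z(j)) \twoheadrightarrow E_\infty^{0,i}$, which via Proposition \ref{P-BlochOgusSS}(c) is exactly the surjection (\ref{E-EtUnram}) onto the unramified cohomology $H^i_{\mathrm{ur}}(K(C), \Z/m\Z(j))$. At this point the problem is reduced to the finiteness of the étale cohomology groups $\he^i(C, \Z/m\Z(j))$.

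Finally, I would invoke Corollary \ref{C-FinEt}: since $K$ is of type $\Fpm$ and $C$ is a smooth geometrically integral $K$-variety, the étale cohomology groups $\he^i(C, \Z/m\Z(j))$ are finite for every $i \geq 0$ and every $j$. As a quotient of a finite group is finite, the surjection above forces the finiteness of $H^i_{\mathrm{ur}}(K(C), \Z/m\Z(j))$ for all $i \geq 1$ and all $j$, completing the argument.

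Since all three ingredients --- the dimension-vanishing in Bloch--Ogus, the identification of the $E_2^{0,*}$ column with unramified cohomology, and the finiteness in Corollary \ref{C-FinEt} --- have already been assembled, the proof is essentially bookkeeping rather than new computation. The only point that merits a moment's care is the degeneration $E_2^{0,q} = E_\infty^{0,q}$ when only two columns are nontrivial, but this is standard and I expect no real obstacle.
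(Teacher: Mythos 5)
Your argument is correct and is essentially the paper's own proof: the two-column degeneration of the Bloch--Ogus spectral sequence for a curve gives the surjection $\he^i(C,\Z/m\Z(j))\twoheadrightarrow H^i_{\mathrm{ur}}(K(C),\Z/m\Z(j))$ (this is exactly (\ref{E-EtUnram})), and Corollary \ref{C-FinEt} then yields finiteness. The only difference is that you spell out the standard degeneration step $E_2^{0,q}=E_\infty^{0,q}$, which the paper leaves implicit.
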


We note that Proposition \ref{P-4.1} for $m=2$ has been used in \cite{CRR4} and \cite{CRR5} to prove the following finiteness theorem for spinor groups with good reduction: {\it Let F = K(C) be the function field of a smooth geometrically integral curve C over a field $K$ of characteristic $\neq 2$ that satisfies $\mathrm{(F}_2'\mathrm{)}$, and let $V$ be the set of geometric places of $F$ (corresponding to the closed points of $C$). Then the number of $F$-isomorphism classes of spinor groups $G = \mathrm{Spin}_n(q)$ of nondegenerate quadratic forms $q$ over $F$ in $n \geqslant 5$ variables that have good reduction at all $v \in V$ is finite.}

\vskip2mm

Next, for smooth algebraic varieties $X$ of dimension $\geq 2$, relatively little is known about finiteness properties of unramified cohomology in degrees $\geq 3$ in general. We should point out, however, that in the case of quadrics, several finiteness statements can be deduced from results of Kahn \cite{KahnLower} and Kahn, Rost, and Sujatha \cite{KRS}. More precisely, let $q$ be a nondegenerate quadratic form over a field $F$ of characteristic $\neq 2$ and denote by $X$ the associated projective quadratic. A natural approach for analyzing unramified cohomology is to
consider the restriction map
$$
\eta_2^i \colon H^i (F, \Z/2 \Z (1)) \to H^i_{\rm ur}(F(X), \Z/ 2 \Z(1)).
$$
If $\dim X > 2$, it is shown in \cite{KahnLower} that the cokernel of $\eta_2^3$ has order at most 2, while the main results of \cite{KRS} imply that $\vert {\rm coker}~\eta_2^4 \vert \leq 4$ for $\dim X > 4.$ Consequently, if $F = K$ is a field of type $\mathrm{(F}_2'\mathrm{)}$, we see that $H^3_{{\rm ur}}(K(X), \Z/ 2 \Z (1))$ and $H^4_{{\rm ur}}(K(X), \Z / 2 \Z (1))$ are finite in the respective cases.

%\addtocounter{thm}{1}

Returning now to the general case, the edge map in (\ref{E-EtUnram}) is typically not surjective for varieties of dimension $\geq 2.$
%in general, one no longer has surjection as in (\ref{E-EtUnram}).
However, the finiteness of unramified cohomology in degree 3 in this setting turns out to be closely related to the question of the finite generation of the Chow group $CH^2 (X)$ of codimension 2 cycles modulo rational equivalence. First, we note the following (well-known) general fact about first quadrant spectral sequences satisfying condition (b) of Proposition \ref{P-BlochOgusSS}.

\begin{lemma}\label{L-SpecSeqLemma}
Let $E_2^{p,q} \Rightarrow E^{p+q}$ be a first quadrant spectral sequence. Assume that $E_2^{p,q} = 0$ for $p > q$. Then there is an exact sequence
$$
E^3 \stackrel{e}{\longrightarrow} E_2^{0,3} \stackrel{d_2}{\longrightarrow} E_2^{2,2} \to E^4,
$$
where $e$ is the usual edge map and $d_2$ is the differential of the spectral sequence.

\end{lemma}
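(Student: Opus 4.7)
The plan is to exploit the vanishing hypothesis $E_2^{p,q} = 0$ for $p > q$ to show that the spectral sequence degenerates very aggressively near the bidegrees $(0,3)$ and $(2,2)$, so that the claimed sequence reduces to the obvious short exact sequences coming from the filtrations on $E^3$ and $E^4$.

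First I would analyze the edge map $e$. The filtration on $E^3$ has graded pieces $E_\infty^{p,3-p}$ for $p = 0,1,2,3$. By hypothesis $E_2^{2,1} = 0$ and $E_2^{3,0} = 0$, so the corresponding $E_\infty$ terms vanish and the filtration shortens to
$$0 \longrightarrow E_\infty^{1,2} \longrightarrow E^3 \longrightarrow E_\infty^{0,3} \longrightarrow 0,$$
which identifies $\mathrm{im}(e)$ with $E_\infty^{0,3} \subset E_2^{0,3}$. Next I would identify $E_\infty^{0,3}$ explicitly: the outgoing differentials are $d_r \colon E_r^{0,3} \to E_r^{r,4-r}$, and for $r = 3$ the target $E_3^{3,1}$ is a subquotient of $E_2^{3,1} = 0$, for $r = 4$ the target is a subquotient of $E_2^{4,0} = 0$, and for $r \geq 5$ the target lies outside the first quadrant. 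Hence $E_\infty^{0,3} = \ker(d_2 \colon E_2^{0,3} \to E_2^{2,2})$, which gives exactness at $E_2^{0,3}$.

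The analogous analysis at $(2,2)$ handles exactness at $E_2^{2,2}$. The outgoing differentials $d_r \colon E_r^{2,2} \to E_r^{2+r,3-r}$ all vanish: $E_2^{4,1} = 0$ by hypothesis, $E_3^{5,0}$ is a subquotient of $E_2^{5,0} = 0$, and for $r \geq 4$ the target is outside the first quadrant. The only nonzero incoming differential is $d_2 \colon E_2^{0,3} \to E_2^{2,2}$, since for $r \geq 3$ the source has negative $p$-coordinate. Consequently $E_\infty^{2,2} = E_2^{2,2}/\mathrm{im}(d_2)$. On the other hand, in the filtration of $E^4$ the pieces $E_\infty^{3,1}$ and $E_\infty^{4,0}$ both vanish (as subquotients of $E_2^{3,1} = 0$ and $E_2^{4,0} = 0$), so $F^2 E^4 = E_\infty^{2,2}$, and the standard map $E_2^{2,2} \to E^4$ is the composition
$$E_2^{2,2} \twoheadrightarrow E_\infty^{2,2} = F^2 E^4 \hookrightarrow E^4,$$
whose kernel is precisely $\mathrm{im}(d_2)$.

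There is no real obstacle here: the argument is entirely bookkeeping with the coniveau-type vanishing condition, and the only care needed is to verify at each bidegree that the targets/sources of all higher differentials are either outside the first quadrant or are subquotients of terms killed by the hypothesis $E_2^{p,q} = 0$ for $p > q$.
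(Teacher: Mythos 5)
Your proof is correct and is exactly the standard argument; the paper itself states this lemma as a well-known fact and gives no proof, so your bookkeeping (identifying $E_\infty^{0,3}=\ker d_2$, $E_\infty^{2,2}=E_2^{2,2}/\mathrm{im}\,d_2=F^2E^4$, using the vanishing of $E_2^{2,1},E_2^{3,0},E_2^{3,1},E_2^{4,0},E_2^{4,1},E_2^{5,0}$) supplies precisely the details the paper omits.
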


%\noindent (In the statement, we assume, as usual, that for each $n$, the filtration $ \{ F^p E^n \}$ on $E^n$ is such that $F^p E^n = 0$ for sufficiently large $p$ and $F^p E^n = E^n$ for sufficiently small $p$; consequently, since we are working with a first quadrant spectral sequence, we have $F^0 E^n = F^{-1} E^n = F^{-2}E^n = \cdots = E^n$ and $F^{n+1} E^n = F^{n+2} E^n = F^{n+3} E^n = \cdots = 0.$)

%\vskip2mm

%\noindent

%We give the definitions of the maps for completeness; the proof of exactness then easily follows. We take
%$e \colon E^3 \to E_2^{0,3}$ to be the usual edge map defined as the composition of the surjection $E^3 \twoheadrightarrow E^{0,3}_{\infty}$ followed by the inclusion $E^{0,3}_{\infty} \hookrightarrow E_2^{0,3}.$ The map $d_2$ is simply the differential of the spectral sequence. The map $f \colon E_2^{2,2} \to E^4$ is obtained as follows. Our assumption implies that
%$$
%F^3 E^4/ F^4 E^4 = E^{3,1}_{\infty} = 0 \ \ \ \text{and} \ \ \ F^4E^4/ F^5E^4 = E^{4,0}_{\infty} = 0.
%$$
%Consequently, $F^3 E^4 = F^4 E^4 = F^5 E^4 = \cdots = 0.$ Therefore, we have an inclusion $$E^{2,2}_{\infty} = F^2 E^4/F^3 E^4 = F^2 E^4 \hookrightarrow E^4.$$ Next, it is easy to see that $E_{\infty}^{2,2} = E_3^{2,2}$, and since $E_2^{4,1} = 0$, so that the differential $d_2 \colon E_2^{2,2} \to E_2^{4,1}$ is zero, we obtain a surjection $E_2^{2,2} \twoheadrightarrow E_{\infty}^{2,2}.$ The map $f$ is then defined as the composition
%$$
%E_2^{2,2} \twoheadrightarrow E_{\infty}^{2,2} \hookrightarrow E^4.
%$$

\vskip2mm

Applying Lemma \ref{L-SpecSeqLemma} to the Bloch-Ogus spectral sequence (with $b =2$), we thus obtain, for any smooth algebraic variety $X$ over an arbitrary field $F$ and any positive integer $m$ prime to ${\rm char}~F$, an exact sequence
\begin{equation}\label{E-BOChow1}
\he^3 (X, \Z / m \Z (2)) \to H^0 (X, \mathcal{H}^3(\Z / m \Z (2))) \to H^2 (X, \mathcal{H}^2 (\Z / m \Z (2))) \to \he^4 (X, \Z m \Z (2)).
\end{equation}
On the other hand, a well-known consequence of Quillen's proof of Gersten's conjecture in algebraic $K$-theory is the existence of an isomorphism
%we have the well-known identification (see, e.g., the proof of \cite[Theorem 7.7]{BO})
$$
H^2 (X, \mathcal{H}^2 (\Z / m \Z (2))) \simeq CH^2 (X)/m
$$
(see, e.g., the proof of \cite[Theorem 7.7]{BO}). Thus, using Proposition \ref{P-BlochOgusSS}(c), we may rewrite (\ref{E-BOChow1}) as
\begin{equation}\label{E-BOCHow2}
\he^3 (X, \Z / m \Z (2)) \to H^3_{\text{ur}} (F(X), \Z / m \Z(2)) \to CH^2 (X)/m \to \he^4 (X, \Z m \Z (2)).
\end{equation}
In view of Corollary \ref{C-FinEt}, we therefore have

\begin{prop}\label{P-4.3}
Let $K$ be a field, $X$ a smooth, geometrically integral algebraic variety over $K$, and $m$ a positive integer prime to ${\rm char}~K.$ If $K$ is of type $\Fpm$, then the
unramified cohomology group $H^3_{\mathrm{ur}} (K(X), \Z/ m \Z (2))$ is finite if and only if $CH^2 (X)/m$ is finite. In particular, $H^3_{\mathrm{ur}} (K(X), \Z/ m \Z (2))$ is finite if $CH^2(X)$ is finitely generated.
\end{prop}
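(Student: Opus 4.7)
The plan is to deduce Proposition \ref{P-4.3} directly from the exact sequence (\ref{E-BOCHow2}) together with Corollary \ref{C-FinEt}, which guarantees that all the étale cohomology groups appearing are finite. The point is that (\ref{E-BOCHow2}) sandwiches the unknown groups $H^3_{\mathrm{ur}}(K(X), \Z/m\Z(2))$ and $CH^2(X)/m$ between two finite étale cohomology groups, so finiteness of one is equivalent to finiteness of the other.

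More precisely, I would first invoke Corollary \ref{C-FinEt}: since $K$ is of type $\Fpm$ and $X$ is smooth and geometrically integral over $K$, the groups $\he^i(X, \Z/m\Z(2))$ are finite for all $i \geq 0$. Applying this to $i=3$ and $i=4$, both of the outer terms in
$$
\he^3(X, \Z/m\Z(2)) \xrightarrow{\alpha} H^3_{\mathrm{ur}}(K(X), \Z/m\Z(2)) \xrightarrow{\beta} CH^2(X)/m \xrightarrow{\gamma} \he^4(X, \Z/m\Z(2))
$$
are finite. Writing $A = \mathrm{im}\,\alpha \subset H^3_{\mathrm{ur}}(K(X), \Z/m\Z(2))$ and $B = \mathrm{im}\,\beta \subset CH^2(X)/m$, exactness gives short exact sequences
$$
0 \to A \to H^3_{\mathrm{ur}}(K(X), \Z/m\Z(2)) \to B \to 0
\quad \text{and} \quad
0 \to B \to CH^2(X)/m \to \mathrm{im}\,\gamma \to 0,
$$
with $A$ finite (as a quotient of a finite group) and $\mathrm{im}\,\gamma$ finite (as a subgroup of a finite group). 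Hence $H^3_{\mathrm{ur}}(K(X), \Z/m\Z(2))$ is finite if and only if $B$ is finite, which in turn is equivalent to $CH^2(X)/m$ being finite. This gives the ``if and only if'' statement.

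For the final sentence, I would observe that if $CH^2(X)$ is finitely generated as an abelian group, then the quotient $CH^2(X)/m$ is a finitely generated $\Z/m\Z$-module, hence finite; thus $H^3_{\mathrm{ur}}(K(X), \Z/m\Z(2))$ is finite by the equivalence just established. There is no real obstacle here: the entire argument is a routine diagram chase, and all the nontrivial work has already been done in assembling the Bloch--Ogus sequence (\ref{E-BOCHow2}) and in Corollary \ref{C-FinEt}. The only point worth double-checking is that Corollary \ref{C-FinEt} is applicable in the generality stated --- but this is fine, since the hypotheses of the proposition are exactly those of the corollary.
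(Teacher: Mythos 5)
Your argument is exactly the paper's: both pass through the exact sequence (\ref{E-BOCHow2}) and Corollary \ref{C-FinEt} to sandwich $H^3_{\mathrm{ur}}(K(X),\Z/m\Z(2))$ and $CH^2(X)/m$ between finite étale cohomology groups, then conclude the equivalence by the elementary observation that the middle two terms of such a four-term exact sequence are finite or infinite together. You spell out the diagram chase slightly more explicitly than the paper does, but the route and all the key inputs are identical.
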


\vskip2mm

\noindent The two statements of Theorem \ref{T-MainThm} are now contained in Propositions \ref{P-4.1} and \ref{P-4.3}, which concludes the proof.

\vskip5mm

\noindent {\bf Remark 4.5.} The question of the finite generation of $CH^2(X)$ (or, at least, the finiteness of $CH^2(X)/m$) is in general a wide-open problem; we mention a couple of cases where the affirmative answer is known.

\vskip2mm

\noindent (a) \parbox[t]{16cm}{(cf. \cite[\S 4.3]{CT-SB}) Let $K$ be a field of characteristic 0 of type $\Fpm$ and $X$ a smooth geometrically, integral algebraic variety of dimension $d$ over $K$. Write $\bar{X} = X \times_K \bar{K}$ and suppose there exists a dominant rational map
$$
\mathbb{A}_{\bar{K}}^{d-1} \times_{\bar{K}} C \dashrightarrow \bar{X},
$$
where $C/ \bar{K}$ is an integral curve. Then, using Corollary \ref{C-FinEt}, the argument in \cite[Theorem 4.3.7]{CT-SB} shows that $CH^2(X)/m$ is finite. One of the key points is that for any smooth algebraic variety $U$ over $K$, the group ${}_mCH^2(U)$ is finite: indeed, as observed in \cite[Corollaire 2]{CTSS}, the Merkurjev-Suslin theorem implies that ${}_mCH^2(U)$ is a subquotient of $\he^3 (U, \Z / m \Z (2)).$}

\vskip2mm

\noindent (b) \parbox[t]{16cm}{Let $X$ be a noetherian scheme. Recall that $CH_0(X)$ is defined as the cokernel of the natural map
$$
\bigoplus_{x \in X_1} K_1 (\kappa(x)) \to \bigoplus_{x \in X_0} K_0 (\kappa(x))
$$
induced by valuations (or, equivalently, coming from the localization sequence in algebraic $K$-theory). One of the key results of higher-dimensional global class field theory is that if $X$ is a regular scheme of finite type over $\Z$, then $CH_0(X)$ is a finitely generated abelian group. This was initially established in the work of Bloch \cite{BlochCFT}, Kato and Saito \cite{Kato-Saito}, and Colliot-Th\'el\`ene, Soul\'e, and Sansuc \cite{CTSS}; a more recent treatment was given by Kerz and Schmidt \cite{KerzSchmidt} based on ideas of Wiesend. In particular, if $X$ is a smooth irreducible algebraic surface over a finite field, then $CH_0(X)$ is finitely generated.}

\vskip3mm

\noindent To conclude this section, let us observe that the last example points to a connection between finiteness properties of unramified cohomology and Bass's conjecture on the finite generation of algebraic $K$-groups. We refer the reader to \cite{Geiss} for a detailed exposition and precise statements of Bass's conjecture and its motivic analogues.

\section{Unramified cohomology of tori}\label{S-5}

In this section, we will establish Theorem \ref{T-MainThm2} on the finiteness of the unramified cohomology of tori and discuss some applications to $R$-equivalence.

First, however, we would like to put in context the definition of the degree 1 unramified cohomology with coefficients in a torus given in \S \ref{S-1}. Let $K$ be a field equipped with a discrete valuation $v$, with valuation ring $\mathcal{O}_v$ and residue field $\kappa(v).$ Suppose that $m$ is a positive integer prime to ${\rm char}~\kappa(v).$ Recall that by absolute purity and the Gersten conjecture for discrete valuation rings, there is an exact sequence
$$
0 \to \he^i(\mathcal{O}_v, \Z/m \Z(j)) \stackrel{f_v}{\longrightarrow} H^i(K, \Z /m \Z(j)) \stackrel{\partial_{\mathcal{O}_v}^i}{\longrightarrow} H^{i-1}(\kappa(v), \Z /m \Z(j-1)) \to 0
$$
where $f_v$ is the natural map and $\partial_{\mathcal{O}_v}^i$ coincides up to sign with the residue map $\partial_v^i$ introduced in \S\ref{S-1} (see \cite[3.3, 3.6]{CT-SB}). In particular, $\ker \partial_v^i = \ker \partial_{\mathcal{O}_v}^i = \he^i(\mathcal{O}_v, \Z/m \Z(j)).$ It follows that if $X$ is a smooth algebraic variety over a field $F$ with function field $F(X)$, then for the unramified cohomology with respect to the geometric places, we have
\begin{equation}\label{E-DefUnr1}
H^i_{{\rm ur}}(F(X), \Z / m \Z(j)) = \bigcap_{P \in X^{(1)}} \he^i(\mathcal{O}_{X,P}, \Z / m \Z (j)),
\end{equation}
where $\mathcal{O}_{X,P}$ denotes the local ring of $X$ at $P.$
%This description extends to arbitrary group schemes of multiplicative type as follows.
Now let $X$ be as above and $\mathbb{G}$ be a group scheme of multiplicative type over $X$. In this case,
%Suppose $X$ is a smooth, geometrically integral algebraic variety over a field $K$ with function field $K(X)$ and $\mathbb{G}$ is a group scheme of multiplicative type over $X$. Then, in analogy with \S \ref{S-1},
we define the degree 1 unramified cohomology $H^1_{ur}(F(X), G)$ of the generic fiber $G = \mathbb{G}_{F(X)}$ by
\begin{equation}\label{E-DefUnr2}
H^1_{{\rm ur}}(F(X),G) = {\rm Im}~(\he^1(X, \mathbb{G}) \stackrel{f}{\longrightarrow} H^1(F(X),G)),
\end{equation}
where $f$ is the natural map. To make sense of this definition, we observe that by \cite[\S 6]{CTS}, we have
$$
H^1_{{\rm ur}}(F(X),G) = \bigcap_{P \in X^{(1)}} {\rm Im}~f_P^G,
$$
where
$$
f_P^G \colon \he^1(\mathcal{O}_{X,P}, \mathbb{G}) \to H^1(F(X),G)
$$
are the natural maps, which are injective for all points $P$ of codimension 1 according to \cite{CTSCoh}. Comparing (\ref{E-DefUnr2}) with (\ref{E-DefUnr1}), we see that the definition of degree 1 unramified cohomology with coefficients in a group of multiplicative type (in particular, a torus) that we adopted in \S\ref{S-1} is consistent
%(you can add that we adopted in \S 1) is consistent ..
%Thus, the definition of degree 1 unramified cohomology with coefficients in a torus that we adopted in \S\ref{S-1} is consistent
with the usual definition using residue maps in the case of finite coefficients.

Returning now to the set-up of Theorem \ref{T-MainThm2}, let $X$ be a smooth, geometrically integral variety over a field $K$ and $\mathbb{T}$ be a torus over $X$. Set $T = \mathbb{T}_{K(X)}$ to be the generic fiber of $\mathbb{T}$ and denote by $K(X)_T$ the minimal splitting field of $T$ inside a fixed separable closure $\overline{K(X)}$ of $K(X).$
We assume that $K$ is of type $\Fpm$ for some positive integer $m$ that is prime to ${\rm char}~K$ and divisible by $n = [K(X)_T :K(X)]$.

\vskip2mm

\noindent {\it Proof of Theorem \ref{T-MainThm2}.} First, as in the proof of Corollary  \ref{C-FinTori}(b), Hilbert's Theorem 90 implies that $H^1(K(X), T) = H^1 (K(X)_T/K(X), T)$ is a group of exponent $n.$ Consequently, the natural map $\he^1(X, \mathbb{T}) \to H^1(K(X),T)$ factors through $\he^1(X, \mathbb{T})/n \he^1(X, \tT)$, so, to complete the proof, we need to show that the latter quotient group is finite. For this, we observe that the Kummer sequence
$$
1 \to \tT[n] \to \tT \stackrel{x \mapsto x^n}{\longrightarrow} \tT \to 1
$$
yields an embedding
\begin{equation}\label{E-Embedding}
\he^1(X, \tT)/n \he^1(X, \tT) \hookrightarrow \he^2(X, \tT[n]).
\end{equation}
To see that $\he^2(X, \tT[n])$ is finite, we note that since $X$ is a smooth algebraic variety, $\mathbb{T}$ is isotrivial by \cite[Expos\'e X, Th\'eor\`eme 5.16]{SGA3}, so there exists a connected principal \'etale Galois cover $Y \to X$ with (finite) Galois group $\mathfrak{G}$ that splits $\mathbb{T}.$  Then $\tT[n]_Y$ is a product of groups of the form $\mu_{n', Y}$ with $n' \vert n$, and it follows from Remark 2.5 and Corollary \ref{C-FinEt} that the groups $\he^i(Y, \tT[n]_Y)$ are finite for all $i \geq 0$. Then the groups $H^p(\mathfrak{G}, \he^q(Y, \tT[n]_Y))$ are finite, and the Hochschild-Serre spectral sequence
$$
E_2^{p,q} = H^p(\mathfrak{G}, \he^q(Y, \tT[n]_Y)) \Rightarrow \he^{p+q}(X, \tT[n])
$$
gives the finiteness of $\he^2(X, \tT[n])$, as needed. $\Box$

\vskip2mm

\noindent {\bf Remark 5.1.} Our proof actually shows that if $K$ is of type $\Fpm$ and $m H^1(K(X), T) = 0$, then $H^1_{{\rm ur}}(K(X), T)$ is finite.

%set $\bar{X} = X \times_K \overline{K}$ and denote by $\overline{T_X[n]}$ the pullback of $T_X[n]$ under the natural projection map $\bar{X} \to X.$ Since $T$ is defined over $K$, it follows that $\overline{T_X[n]}$ is isomorphic to a product of groups of the form $\mu_{n',\bar{X}}$ with $n' \vert n.$ Consequently, $\he^2(\bar{X}, \overline{T_X[n]})$ is $n$-torsion and finite by \cite[Expos\'e XVI, Th\'eor\`eme 5.2]{SGA4}. Since $K$ is of type ${\rm (F_n')}$ for all $n \vert m$ by Remark 2.5, the same argument as used in the proof of Corollary \ref{C-FinEt} then shows that
%, and hence, since $K$ is of type $\Fpm$, Proposition \ref{P-FieldF'} and the Hochschild-Serre spectral sequence imply that
%$\he^2(X, T_X[n])$ is finite, and hence $\he^1(X, T_X)/n \he^1(X, T_X)$ is finite in view of (\ref{E-Embedding}). $\Box$

%Then (\ref{E-Embedding}) yields the finiteness of $\he^1(X, T_X)/m \he^1(X, T_X)$, as needed. $\Box$

\vskip2mm

As we already mentioned in \S \ref{S-1}, our analysis of the finiteness properties of unramified cohomology is motivated in part by the general program of studying absolutely almost simple groups having the same maximal tori over the field of definition. An important component of this work is centered around absolutely almost simple algebraic groups having good reduction at a specified set of discrete valuations. In this regard, we anticipate that Theorem \ref{T-MainThm2} will be useful in addressing the following conjecture (see \cite[\S 6]{CRR5}).

\vskip2mm

\noindent {\bf Conjecture 5.2.} {\it Let $K$ be a field of type $\tF$, and $C$ a smooth affine geometrically integral curve over $K$ with function field $K(C).$ Let $V_0$ be the set of geometric places of $K(C)$, associated with the closed points of $C$. Given an absolutely almost simple simply
connected algebraic $K(C)$-group $G$ for which $\mathrm{char}\: K$ is ``good," the set of $K$-isomorphism classes of $K$-forms of $G$ that have good reduction at all $v \in V_0$ is finite.}

\addtocounter{thm}{2}

\vskip2mm

%In the remainder of this section, we would like to mention some applications of the preceding argument to flasque tori and $R$-equivalence. We first briefly recall the relevant definitions and refer the reader to \cite{CTS-Requiv} and \cite{CTS-Flasque} for full details.

The proof of Theorem \ref{T-MainThm2} has the following consequence for flasque tori.
%Next,as an application of the argument used in the proof of Theorem \ref{T-MainThm2}, we would like to note the following consequence for flasque tori.
First, recall that a torus $\tT$ over a normal, connected scheme $X$ is said to be {\it flasque} if there exists a connected principal \'etale Galois cover $Y \to X$ with (finite) Galois group $\mathfrak{G}$ that splits $\mathbb{T}$ and such that $H^{-1}(\mathfrak{H}, \widehat{\tT}(Y)) = 0$ for all subgroups $\mathfrak{H}$ of $\mathfrak{G}$, where $\widehat{\tT}$ denotes the group of characters of $\tT$ and we consider Tate cohomology (this definition, introduced in \cite{CTS-Flasque} in a slightly more general form, is a natural extension to schemes of the more classical notion of flasque tori over fields, which are discussed in detail in \cite{CTS-Requiv}). An important observation in the theory of flasque tori concerns the existence of {\it flasque resolutions}. Namely, if $X$ is a locally noetherian, normal, connected scheme, then for any torus $\tT$ over $X$, there exists an exact sequence of $X$-tori
$$
1 \to \mathbb{S} \to \mathbb{E} \to \tT \to 1,
$$
where $\mathbb{S}$ is flasque and $\mathbb{E}$ is quasi-trivial (see \cite[Proposition 1.3]{CTS-Flasque}).

%we refer the reader to \cite{CTS-Requiv} and \cite{CTS-Flasque} for full details).

%if $X$ is a normal, connected scheme,

%We first briefly recall the relevant definitions and refer the reader to \cite{CTS-Requiv} and \cite{CTS-Flasque} for full details.

%(see, e.g., \cite{CTS-Requiv} and \cite{CTS-Flasque} for the relevant definitions and background).

\begin{cor}\label{C-FlasqueFinite}
Let $X$ be a smooth, geometrically integral algebraic variety over a field $K$ and $T$ a flasque torus over the function field $K(X)$. Suppose $K$ is of type $\Fpm$
%Suppose $T$ is a torus defined over $K$ and $K$ is of type $\Fpm$ for
for a positive integer $m$ prime to ${\rm char}~K.$ If $m H^1(K(X), T) = 0$ $($in particular, if $[K(X)_T:K(X)]$ divides $m)$, then $H^1(K(X), T)$ is finite.
%, where $T = \tT_{K(X)}$ is the generic fiber of $\tT$.
%and $T$ is flasque over $K$, then
%$m = [K_T:K]$ is prime to ${\rm char}~K$, and $K$ is of type $\Fpm.$ If $T$ is flasque over $K$, then
%$H^1(K(X), T_{K(X)})$ is finite for any smooth geometrically integral algebraic variety $X$ over $K$.
\end{cor}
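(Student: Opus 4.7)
The plan is to combine Remark 5.1 with the classical theorem of Colliot-Th\'el\`ene and Sansuc that for a flasque torus Galois cohomology coincides with unramified cohomology. Since Remark 5.1 already yields the finiteness of $H^1_{\mathrm{ur}}(K(X), T)$ as soon as $m H^1(K(X), T) = 0$, the corollary will follow once I establish the identification $H^1_{\mathrm{ur}}(K(X), T) = H^1(K(X), T)$.

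The first step is to spread $T$ out to a flasque torus $\mathbb{T}$ on a dense open $U \subseteq X$: by standard limit arguments, the splitting extension $K(X)_T/K(X)$ propagates to a connected principal \'etale Galois cover $V \to U$ with group $\mathfrak{G} = \mathrm{Gal}(K(X)_T/K(X))$, and $T$ correspondingly extends to a torus $\mathbb{T}$ over $U$ split by $V$. The flasqueness condition $H^{-1}(\mathfrak{H}, \widehat{\mathbb{T}}(V)) = 0$ for $\mathfrak{H} \leq \mathfrak{G}$ coincides with the corresponding condition on the generic fiber and is therefore automatic. Since $K(U) = K(X)$ and the unramified cohomology at the geometric places is intrinsic to the function field, Remark 5.1 applied to $(\mathbb{T}, U)$ gives that $H^1_{\mathrm{ur}}(K(X), T)$ is finite.

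Next, to obtain $H^1_{\mathrm{ur}}(K(X), T) = H^1(K(X), T)$, I would use the definition in (\ref{E-DefUnr2}) to reduce to showing that, for each $P \in X^{(1)}$, the natural map $\he^1(\mathcal{O}_{X,P}, \mathbb{T}) \to H^1(K(X), T)$ is surjective, where $\mathbb{T}$ denotes any flasque extension of $T$ over the DVR $\mathcal{O}_{X,P}$ (such an extension exists either by enlarging $U$ to contain $P$, or directly via the flasque resolution construction of \cite[Proposition 1.3]{CTS-Flasque} over $\mathcal{O}_{X,P}$). This surjectivity is the defining property of a flasque torus over a DVR established by Colliot-Th\'el\`ene and Sansuc: $\he^1(\mathcal{O}_v, \mathbb{S}) \xrightarrow{\sim} H^1(F_v, \mathbb{S}_{F_v})$ for every flasque torus $\mathbb{S}$ over a DVR $\mathcal{O}_v$ with fraction field $F_v$ (see \cite{CTS-Flasque}). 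Intersecting over all $P$ yields the required equality.

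The parenthetical assertion that $[K(X)_T : K(X)]$ dividing $m$ forces $m H^1(K(X), T) = 0$ is immediate from Hilbert's Theorem 90, exactly as in the proof of Corollary \ref{C-FinTori}(b). The main obstacle here is minor bookkeeping around the spreading out of $T$ and the construction of flasque extensions at each codimension 1 point; the conceptual content is delivered entirely by Remark 5.1 together with the classical flasque-torus property recalled above.
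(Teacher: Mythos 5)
Your argument is correct and in substance the same as the paper's: both proofs spread $T$ out to a flasque torus $\mathbb{T}$ over a dense open $U \subset X$ (via \cite[Proposition 1.5]{CTS-Flasque}) and both extract the finiteness from that of $\he^1(U,\mathbb{T})/m\,\he^1(U,\mathbb{T})$, which is exactly the content behind Remark 5.1. The only real difference is how one sees that $H^1(K(X),T)$ is exhausted by classes extending over $U$: the paper quotes the global surjectivity of $\he^1(U,\mathbb{T}) \to H^1(K(X),T)$ for flasque $\mathbb{T}$ (\cite[Theorem 2.2]{CTS-Flasque}), whereas you recover the same surjectivity from its DVR-local form at each codimension~1 point together with the description of the image as $\bigcap_P \mathrm{Im}\, f_P^{\mathbb{T}}$ from \cite[\S 6]{CTS}; these are two faces of the same Colliot-Th\'el\`ene--Sansuc theorem, so nothing essential changes. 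One caution: run the intersection over $P \in U^{(1)}$, not over all of $X^{(1)}$ --- at a point $P \in X^{(1)}$ outside $U$ where the splitting extension $K(X)_T/K(X)$ is ramified, $T$ admits no torus model at all over $\mathcal{O}_{X,P}$ (a torus over a normal local base is split by a finite \'etale cover by \cite[Expos\'e X, Th\'eor\`eme 5.16]{SGA3}), so your fallback of building a flasque model over such an $\mathcal{O}_{X,P}$ via \cite[Proposition 1.3]{CTS-Flasque} would fail there. Since Remark 5.1 applied to $(\mathbb{T},U)$ defines, and only needs, unramified cohomology relative to $U^{(1)}$, restricting to those points costs nothing and closes the argument.
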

\begin{proof}
Since $T$ is flasque over $K(X)$, it follows from \cite[Proposition 1.5]{CTS-Flasque} that there exists a flasque torus $\tT$ over a Zariski open subset $U \subset X$ whose generic fiber is $T$. According to \cite[Theorem 2.2]{CTS-Flasque}, the fact that $\tT$ is flasque implies that
%according to \cite[Theorem 2.2]{CTS-Flasque},
the natural map $\he^1(U, \tT) \to H^1(K(X), T)$ is surjective.
%Since $T$ is flasque over $K$, it follows from \cite[Proposition 1.4]{CTS-Flasque} that $T_X = T \times_K X$ is a flasque $X$-torus. Consequently, the natural map $\he^1(X, T_X) \to H^1(K(X), T_{K(X)})$ is
%surjective by \cite[Theorem 2.2]{CTS-Flasque}.
On the other hand, since $m H^1(K(X), T) = 0$,
%as we just saw in the proof of Theorem \ref{T-MainThm2},
this maps factors through the quotient $\he^1(U, \tT)/m \he^1(U, \tT)$. But, as we have seen in the proof of Theorem \ref{T-MainThm2}, the latter quotient is finite, and our claim follows.
%so the finiteness of the latter yields the finiteness of $H^1(K(X), T)$, as needed.
\end{proof}

We would like to conclude this section with the following application of Corollary \ref{C-FlasqueFinite} to $R$-equivalence on algebraic tori (see, e.g., \cite[\S 4]{CTS-Requiv} for the relevant definitions).
%Before giving the statement, we recall that if $T$ is a torus over a field $F$, then for any flasque resolutions
%$$
%1 \to S \to E \to T \to 1 \ \ \ \text{and} \ \ \ 1 \to S' \to E' \to T \to 1
%$$
%of $T$ over $F$, there is an isomorphism $H^1(F, S) \simeq H^1(F, S')$ (see \cite[\S 5]{CTS-Requiv}).

%To conclude, we will now use Corollary \ref{C-FlasqueFinite} to establish

\begin{cor}\label{C-RequivFinite}
Suppose $X$ is a smooth, geometrically integral algebraic variety over a field $K$ and $T$ is a torus defined over the function field $K(X).$
Assume that $K$ is of type $\Fpm$ for some positive integer $m$ that is prime to ${\rm char}~K$ and divisible by $n = [K(X)_T :K(X)]$. Then
%Assume that $K$ is of type $\Fpm$ for some positive integer $m$ prime to ${\rm char}~K$. If $m H^1(K(X), S) = 0$ for any flasque resolution $1 \to S \to E \to T \to 1$ of $T$ over $K(X)$, then
the group of $R$-equivalence classes $T(K(X))/R$ is finite.
\end{cor}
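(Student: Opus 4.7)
The plan is to reduce the finiteness of $T(K(X))/R$ to an application of Corollary \ref{C-FlasqueFinite} via the classical identification of $R$-equivalence on tori with the Galois cohomology of the flasque part of a flasque resolution. Specifically, by the fundamental theorem of Colliot-Th\'el\`ene and Sansuc \cite{CTS-Requiv}, for any torus $T$ over a field $F$ and any flasque resolution
$$
1 \to S \to E \to T \to 1
$$
(with $S$ flasque and $E$ quasi-trivial), there is a canonical isomorphism $T(F)/R \cong H^1(F, S)$. Applied to $F = K(X)$, this reduces the problem to establishing finiteness of $H^1(K(X), S)$ for an appropriately chosen flasque torus $S$ over $K(X)$.

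Next, the standard construction of flasque resolutions (cf.\ \cite[Proposition 1.3]{CTS-Flasque}) proceeds by fixing a finite Galois extension of $F$ that splits $T$ and producing $S$ and $E$ as tori split by that same extension. Taking this extension to be $K(X)_T$, we obtain a flasque resolution in which $S$ is also split by $K(X)_T$; in particular, $[K(X)_S : K(X)]$ divides $n = [K(X)_T : K(X)]$, which by hypothesis divides $m$.

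At this point the hypotheses of Corollary \ref{C-FlasqueFinite} are fulfilled for the flasque torus $S$ over $K(X)$: the base field $K$ is of type $\Fpm$, and $[K(X)_S : K(X)]$ divides $m$. Applying that corollary yields the finiteness of $H^1(K(X), S)$, which via the Colliot-Th\'el\`ene--Sansuc isomorphism gives the desired finiteness of $T(K(X))/R$.

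The only point requiring a modicum of care is the compatibility of the flasque resolution with the splitting field $K(X)_T$, but this is a built-in feature of the standard construction rather than a genuine obstacle; the substantive work has already been done in the proofs of Theorem \ref{T-MainThm2} and Corollary \ref{C-FlasqueFinite}.
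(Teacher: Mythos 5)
Your proof is correct and follows essentially the same route as the paper: reduce to the finiteness of $H^1(K(X),S)$ for the flasque part $S$ of a flasque resolution via the Colliot-Th\'el\`ene--Sansuc isomorphism $T(K(X))/R \simeq H^1(K(X),S)$, and then invoke Corollary \ref{C-FlasqueFinite}. The only (immaterial) difference is how the exponent bound needed for that corollary is obtained: you arrange the resolution so that $S$ is split by $K(X)_T$, whereas the paper keeps the resolution arbitrary and shows $n\,H^1(K(X),S)=0$ using \cite[Proposition 6]{CTS-Requiv} together with restriction--corestriction.
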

\begin{proof}
Let
$$
1 \to S \to E \to T \to 1
$$
be a flasque resolution of $T$ over $K(X).$ Then, according to \cite[\S 5, Th\'eor\`eme 2]{CTS-Requiv} and \cite[Theorem 3.1]{CTS-Flasque}, there is an isomorphism of abelian groups
$$
T(K(X))/R \stackrel{\sim}{\longrightarrow} H^1(K(X), S).
$$
On the other hand, it follows from \cite[Proposition 6]{CTS-Requiv} and the restriction-corestriction sequence that $n H^1(K(X), S) = 0.$ Since $S$ is flasque, Corollary \ref{C-FlasqueFinite} implies that $H^1(K(X), S)$ is finite, which completes the proof.
%But since $S$ is flasque, it follows from Corollary \ref{C-FlasqueFinite} that $H^1(K(X), S)$ is finite under our hypotheses, which completes the proof.
\end{proof}

%let us now use Corollary \ref{C-FlasqueFinite} to derive Corollary \ref{C-RequivFinite}.

\bibliographystyle{amsplain}

\end{document}